\definecolor{vertfonce}{rgb}{0.20, 0.46, 0.25}
\definecolor{rougefonce}{rgb}{0.64, 0.09, 0.20}
\pgfplotsset{compat=1.15}
\definecolor{zzttqq}{rgb}{0.6,0.2,0.}
\definecolor{xdxdff}{rgb}{0.49019607843137253,0.49019607843137253,1.}
\definecolor{uuuuuu}{rgb}{0.26666666666666666,0.26666666666666666,0.26666666666666666}
\definecolor{bblue}{rgb}{0,0.2,0.7}
\author[V.  Bonnaillie-Noël]{Virginie Bonnaillie-Noël}
\address[ V.  Bonnaillie-Noël]{D\'epartement de math\'ematiques et applications,  \'Ecole normale sup\'erieure, CNRS, Universit\'e PSL, 75005, Paris, France}
\email{Virginie.Bonnaillie@ens.fr}
\author[S. Fournais]{S{\o}ren Fournais}
\address[S. Fournais]{Department of Mathematics,  Aarhus University,  Ny Munkegade 118,  8000 Aarhus~C, Denmark}
\email{fournais@math.au.dk}
\author[A. Kachmar]{Ayman Kachmar}
\address[A. Kachmar]{Lebanese University, Department of Mathematics, Nabatiye, Lebanon}
\address{Center for Advanced Mathematical Sciences (CAMS, American University of Beirut)}
\email{akachmar@ul.edu.lb}
\author[N. Raymond]{Nicolas Raymond}
\address[N. Raymond]{Univ Angers, CNRS, LAREMA, SFR MATHSTIC, F-49000 Angers, France}
\email{nicolas.raymond@univ-angers.fr}
\title[Discrete spectrum of the magnetic Laplacian]{Discrete spectrum of the magnetic Laplacian\\ on perturbed half-planes}
\tikzset{cross/.style={cross out}, minimum size=1pt, draw=black, inner sep =0pt, outer sep=0pt, cross/.default={1pt}}
\theoremstyle{plain}
\newtheorem{theorem}{Theorem}[section]
\newtheorem{lemma}[theorem]{Lemma}
\newtheorem{proposition}[theorem]{Proposition}
\theoremstyle{definition}
\newtheorem{remark}[theorem]{Remark}
\newcommand{\R}{\mathbf{R}}
\newcommand{\Bk}{\color{black}}
\renewcommand{\leq}{\leqslant}	\renewcommand{\geq}{\geqslant}
\newcommand{\dd}{\mathrm{d}}
\newcommand\DG{f_{\star}}
\begin{document}
	
	\begin{abstract}
	The existence  of bound states for the magnetic Laplacian in unbounded domains can be quite challenging in the case of a homogeneous magnetic field.  We provide an affirmative answer for almost flat corners and slightly curved half-planes when the total curvature of the boundary is positive.
	\end{abstract}
	
	\maketitle
	
\section{Introduction}

We consider the Neumann magnetic Laplacian with constant magnetic field $B=1$ on an open set $\Omega_\delta$, which is defined through the following (closed) quadratic form
\[\forall\psi\in H^1_{\mathbf{A}}(\Omega_\delta)\,,\qquad Q_\delta(\psi)=\int_{\Omega_\delta}|(-i\nabla+\mathbf{A})\psi|^2\dd\mathbf{x}\,.\]
Here $\mathbf{A}=(-x_2,0)$ is a vector potential associated with $B=1$ in the sense that
\[\partial_1A_2-\partial_2A_1=1\,.\]
Then, we consider the magnetic Laplacian as the self-adjoint operator $\mathscr{L}_\delta$ associated with $Q_\delta$. We denote by $\lambda(\delta)$ the bottom of its spectrum
\[\lambda(\delta)=\inf_{\substack{\psi\in H^1_{\mathbf A}(\Omega_\delta)\\\psi\not\equiv0}}\frac{\|(-i\nabla+\mathbf A)\psi\|_{L^2(\Omega_\delta)}^2}{\|\psi\|_{L^2(\Omega_\delta)}^2}\,.\]

The study of the spectrum of such operators in different geometries has been the focus of much interest in recent decades. Below, we will give a short overview of results relevant for the present article. For more in depth coverage we refer to \cite{FH10, Raymond}.

In general, when $\Omega_\delta$ is unbounded, we do not know if $\lambda(\delta)$ is an eigenvalue. For instance, in the case when $\Omega_\delta=\Omega_0=:\R\times\R_+$, it is well-known that the spectrum is absolutely continuous and given by the half-line $[\Theta_0,+\infty)$, where $\Theta_0 \approx 0.59$ is a positive universal constant (see \eqref{eq:dG} below). In the present article $\Omega_\delta$ will be a perturbation of the half-plane $\Omega_0$. Let us describe the two types of perturbations that we consider. The first type is a singular perturbation when $\Omega_\delta$ is of the  (corner) form
\begin{equation}\label{eq.Cdelta}
\Omega_\delta=\mathcal{C}_\delta:=\{\mathbf{x}=(x_1,x_2)\in\R\times\R_+ : \frac{x_2}{\tan\delta}>-x_1 \}\,,\quad \delta\in(0,\pi)\,.
\end{equation}
The second type of perturbation under consideration is regular. Consider a bounded continuous compactly supported function $\kappa : \R\to\R$ and, for all $\delta\geq0$, a simple $\mathscr{C}^2$ curve $\gamma_\delta :\R\to\R^2$, parametrized by arc-length and such that its algebraic curvature $\kappa_\delta$ satisfies
\[\kappa_\delta(s)=\delta\kappa(s)\,.\]
Note that $\gamma_\delta''(s)=\delta\kappa(s) \mathbf{n}_\delta(s)$ where $\mathbf{n}_\delta(s)$ is the unit normal such that $\det(\gamma'_\delta,\mathbf{n}_\delta)=1$. We also assume that $\gamma_0(s)=(s,0)$,  $\gamma_\delta(0)=(0,0)$ and that $\delta\mapsto\gamma'_\delta$ is continuous for the uniform topology at $\delta=0$\Bk. Let us now define our perturbed half-space. We write $\R^2\setminus\gamma_\delta=\Gamma_{\delta}^+\sqcup\Gamma_{\delta}^-$ in such a way that $\mathbf{n}_\delta$ is the inward pointing normal to $\partial\Gamma_{\delta}^+$. We let $\Omega_\delta=\Gamma_{\delta}^+$. A  typical example of such a configuration is given by $\gamma_\delta(s)=(s,f_\delta(s))$ where $f_\delta(s)=\delta\int_0^s  g(\sigma)(1-\delta^2g(\sigma)^2)^{-1/2}\dd \sigma$ and $g(\sigma)=\int_0^\sigma \kappa(x)\dd x$,  with $\delta$  chosen  in $(0,\delta_0)$ for a sufficiently small $\delta_0$  to ensure that $\delta g(\sigma)\in  (-1,1)$; in which case $\Gamma_\delta^+=\{(x,y)\in\R^2~|~y> f_\delta(x)\}$.

By the Persson's theorem (see for instance \cite{BN}), it is well-known in both cases that the essential spectrum of this operator $\mathscr{L}_\delta$ is the same as the spectrum of $\mathscr{L}_0$, \emph{i.e.}, $[\Theta_0,+\infty)$. Let us now state our main two results, both establishing the existence of a bound state.

\begin{theorem}[Almost flat corner]\label{thm.main}
We assume that $\Omega_\delta=\mathcal{C}_\delta$.
There exists $\delta_0\in(0,\pi)$ such that, for all $\delta\in(0,\delta_0)$,
\[\lambda(\delta)\leq\Theta_0- \frac{C_1^2}{4} \delta^2+o(\delta^2)<\Theta_0\,,\]
where $C_1>0$ is a  universal constant (defined below in \eqref{eq.C1}).
In particular, the bottom of the spectrum of $\mathscr{L}_\delta$ belongs to the discrete spectrum.
\end{theorem}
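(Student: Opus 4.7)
The strategy is to construct a trial function $\psi_\delta\in H^1_{\mathbf A}(\mathcal C_\delta)$ with a Rayleigh quotient verifying the announced inequality and to apply the min--max principle. Once $Q_\delta(\psi_\delta)/\|\psi_\delta\|^2\leq\Theta_0-\tfrac{C_1^2}{4}\delta^2+o(\delta^2)$ is established, the second assertion is automatic: by Persson's theorem (recalled just above the statement), $\inf\sigma_{\rm ess}(\mathscr L_\delta)=\Theta_0$, so any point of $\sigma(\mathscr L_\delta)$ lying strictly below $\Theta_0$ must be an isolated eigenvalue of finite multiplicity.

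The construction is modelled on the de~Gennes edge mode of the half-plane. Denote by $u_0\in L^2(\R_+)$ the normalised ground state of the one-dimensional operator $-\partial_t^2+(t-\xi_0)^2$ with Neumann condition at $t=0$, where $\xi_0>0$ is the minimizer of the associated band function, so that $\|u_0\|_{L^2(\R_+)}=1$ and $\int_{\R_+}(-|u_0'|^2+(t-\xi_0)^2|u_0|^2)\dd t=\Theta_0$. I would work in each half of a tubular neighbourhood of $\partial\mathcal C_\delta$ using arc-length and normal coordinates $(s,t)$, with the corner placed at $s=0$, and on each side perform a gauge transformation bringing the vector potential to the canonical form $A_s=-t$, $A_t=0$. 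The local Ansatz on each side is the envelope
\[
\psi_\pm(s,t)=f(s)\,e^{i\xi_0 s}u_0(t)\,,
\]
where $f\in H^1(\R)$ is to be optimised. The two pieces are glued at the corner through the phase $e^{i\phi}$ relating the two tubular gauges; since the two orthonormal frames differ by a rotation of angle $\delta$, one finds $\phi=O(\delta)$ near the vertex, and this phase mismatch, which cannot be eliminated by any single global gauge, is the source of the negative correction.

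Expanding $Q_\delta(\psi_\delta)-\Theta_0\|\psi_\delta\|^2$ via the eigenvalue equation for $u_0$ and the normalisation $\|u_0\|_{L^2(\R_+)}=1$ should reduce the problem, up to an $o(\delta^2)\|f\|^2_{L^2(\R)}$ error, to the one-dimensional quadratic form
\[
\|f'\|^2_{L^2(\R)}-C_1\,\delta\,|f(0)|^2,
\]
that of the attractive point interaction $\mathfrak h_\delta:=-\partial_s^2-C_1\,\delta\,\delta_0(s)$ on $L^2(\R)$. The universal constant $C_1$ from \eqref{eq.C1} appears as the coefficient produced when one expands $e^{i\phi}$ to first order in $\delta$ and integrates the resulting cross-terms in $|(-i\partial_s+A_s)\psi|^2$ against $u_0$ and $(t-\xi_0)u_0^2$. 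It is classical that $\mathfrak h_\delta$ has a unique negative eigenvalue $-\tfrac{C_1^2}{4}\delta^2$, with ground state $f_\delta(s)=\sqrt{C_1\delta/2}\,e^{-C_1\delta|s|/2}$. Plugging $f=f_\delta$ into $\psi_\delta$ yields exactly the claimed upper bound.

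The main obstacle is the rigorous extraction of the corner contribution, as the tubular coordinates degenerate at the vertex. The plan involves (i) a smooth partition of unity cutting $\mathcal C_\delta$ into a neighbourhood of each straight edge plus a corner region, at a scale chosen much smaller than $1/\delta$ (the extent of $f_\delta$) but much larger than $1$ (the scale of $u_0$); (ii) a careful Taylor expansion of the phase $e^{i\phi}$ near the vertex in order to read off the coefficient $C_1$ and to control the cross terms; and (iii) use of the exponential decay of $u_0$ at infinity to make the two tubular patches effectively disjoint away from the boundary, so that the cut-off errors are harmless. The bookkeeping of all sub-leading terms in this expansion, so that the remainder is genuinely $o(\delta^2)$, is the most delicate step.
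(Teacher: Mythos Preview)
Your high-level strategy coincides with the paper's: a trial state built from the de\,Gennes transverse profile times a tangential envelope, reduction to the one-dimensional problem $-\partial_s^2-C_1\delta\,\delta_0$, and optimisation with $f(s)\propto e^{-C_1\delta|s|/2}$ to produce the $-C_1^2\delta^2/4$ bound. The paper's implementation differs from your sketch in two respects worth knowing. First, rather than tubular coordinates on each edge with a partition of unity, it exploits the reflection symmetry in the bisector $D_\delta$: the trial state is defined on the half-trapezoid $T^+$ and extended by $u_-(\mathbf x)=e^{-i\phi(S_\delta\mathbf x)}\overline u_+(S_\delta\mathbf x)$ for an explicit quadratic phase $\phi$, so all estimates reduce to $T^+$. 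Second, the vertex singularity is handled not by a cutoff but by inserting an auxiliary angular scale $\gamma$ with $\delta\ll\gamma\ll1$ (eventually $\gamma=\delta^{1/2}$): on a sector $V^+_{\delta,\gamma}$ of opening $\gamma/2$ near the bisector the phase is an explicit linear-in-$\theta$ interpolant $\alpha(r,\theta)$ between the two edge phases, and the energy there is computed directly in polar coordinates. The errors $\mathcal O(\gamma^3)$ and $\mathcal O(\gamma^{-1}\delta^2)$ then balance at $\mathcal O(\delta^{3/2})$.

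One genuine soft spot in your outline is the provenance of $C_1$. It does not come purely from cross-terms in a phase expansion; in the paper it emerges from the combination
\[
J-\Theta_0\!\int_0^\infty t\,f_\star^2\,\dd t,\qquad J=\int_0^\infty t\big(|f_\star'|^2+(t-\xi_0)^2f_\star^2\big)\dd t-\int_0^\infty t(t-\xi_0)(t-2\xi_0)f_\star^2\,\dd t,
\]
and one needs the moment identities $\int t(|f_\star'|^2+(t-\xi_0)^2f_\star^2-\Theta_0 f_\star^2)\,\dd t=\tfrac{3C_1}{2}$ and $\int t(t-\xi_0)(t-2\xi_0)f_\star^2\,\dd t=\tfrac{C_1}{2}$ to evaluate it. The geometric reason the transverse integrals acquire the weight $t$ is that the corner removes a wedge of half-angle $\delta/2$ from the half-plane, so at height $t$ the excised width is $\sim t\delta$. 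Your plan would work, but to get the right constant you will need these identities rather than just a first-order expansion of the matching phase. (Minor typo: your identity for $\Theta_0$ should read $\int(|u_0'|^2+(t-\xi_0)^2u_0^2)\,\dd t=\Theta_0$, with a plus sign.)
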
	

In the corner setting, $\lambda(\delta)$ is only  known to be an eigenvalue for the non-flat situation when $\delta\in(\frac\pi2-\varepsilon,\pi)$, for a sufficiently small $\varepsilon>0$, see \cite{BN, Eetal}.  The regime $\delta\to 0$ considered in the present article tackles the subtle situation when we know that the discrete spectrum tends to disappear. Let us also underline that it is still an open question to know if $\delta\mapsto\lambda(\delta)$ is monotone non-increasing (as suggested by the numerical simulations in \cite{BN}). If one were able to establish this monotonicity, Theorem \ref{thm.main} would imply that $\lambda(\delta)<\Theta_0$ for all $\delta\in(0,\pi)$, and thus the existence of a bound state for all (non flat) convex corners.  

We next state the result for the slightly curved half-plane.

\begin{theorem}[Slightly curved half-plane]\label{thm.main*}
We assume $\Omega_\delta=\Gamma^+_{\delta}$ and $\int_{\R}\kappa(s)\dd s>0$.
There exists $\delta_0>0$ such that, for all $\delta\in(0,\delta_0)$, the discrete spectrum of $\mathscr{L}_\delta$ is non-empty. Moreover, 
\[\lambda(\delta)\leq\Theta_0-\left(\frac{C_1}{2}\int_\R\kappa(s)\mathrm{d}s  \right)^2\delta^2+o(\delta^2)\,,\]
where $C_1$ is the same universal constant as in Theorem \ref{thm.main}.
\end{theorem}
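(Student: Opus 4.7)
The plan is to exhibit a trial state $\psi$ whose Rayleigh quotient realises the claimed upper bound; since the essential spectrum of $\mathscr{L}_\delta$ is $[\Theta_0,+\infty)$ by Persson's theorem, any Rayleigh quotient strictly below $\Theta_0$ forces $\lambda(\delta)$ into the discrete spectrum. The key idea is a two-scale ansatz: a fiber in the $t$-direction given by the de Gennes ground state, combined with a slowly-decaying $s$-profile of weak-coupling type. First I would pass to tubular coordinates $\Phi_\delta(s,t) = \gamma_\delta(s) + t\, \mathbf{n}_\delta(s)$, which yield a diffeomorphism from $\R \times (0,t_0)$ onto a tubular neighbourhood of $\partial\Omega_\delta$ for $\delta$ small (the compact support of $\kappa$ is used here). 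The pullback metric is $a(s,t)^2\, \dd s^2 + \dd t^2$ with $a = 1 - \delta\kappa(s) t$, and the gauge choice $\tilde{A}_t \equiv 0$ gives $\tilde{A}_s(s,t) = -t + \tfrac{1}{2}\delta\kappa(s) t^2$. The quadratic form becomes
\[Q_\delta(\psi) = \int_\R \int_0^{t_0} \Big[a^{-1} |(-i\partial_s + \tilde{A}_s)\psi|^2 + a\, |\partial_t \psi|^2\Big]\, \dd s\, \dd t\,,\]
with $L^2$-weight $a\, \dd s\, \dd t$.

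\textbf{Trial state and expansion.} I would take
\[\psi(s,t) = \chi(t)\, f(s)\, e^{i\tau_0 s}\, \phi_{\tau_0}(t)\,,\]
where $\phi_{\tau_0}$ is the de Gennes ground state (so $\mu(\tau_0) = \Theta_0$), $\chi$ is a smooth cutoff localising in the coordinate patch, and $f$ is a real $s$-profile to optimise. Expanding $a^{-1}$, writing $(\tilde{A}_s + \tau_0)^2 = (t-\tau_0)^2 - \delta\kappa(s) t^2 (t-\tau_0) + O(\delta^2)$, and expanding $a\,|\phi'_{\tau_0}|^2$ to order $\delta$, then using the de Gennes ODE and integration by parts in $t$, one collects the $O(\delta)$ coefficient of $\kappa(s)|f(s)|^2$ into a single combination of $\phi_{\tau_0}$-integrals equal (by direct computation) to $-C_1$, with $C_1$ the universal constant of \eqref{eq.C1}. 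The upshot is
\[\frac{Q_\delta(\psi)}{\|\psi\|^2} = \Theta_0 + \frac{\int_\R |f'|^2\, \dd s - \delta\, C_1 \int_\R \kappa(s)|f(s)|^2\, \dd s + o(\delta)}{\int_\R |f(s)|^2\, \dd s}\,.\]

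\textbf{Weak-coupling optimisation.} We are reduced to the standard 1D Schrödinger quotient for $-\partial_s^2 - \delta\, C_1 \kappa(s)$. Since $\int_\R \kappa > 0$, this operator has a bound state at weak coupling, a fact most easily proved via the explicit trial profile $f(s) = e^{-\mu|s|}$: one computes that the quotient equals $\mu^2 - \mu\, \delta C_1 \int_\R \kappa + o(\delta^2)$, optimised at $\mu_\star = \tfrac{1}{2}\delta C_1 \int_\R \kappa$, with minimal value $-\mu_\star^{\,2} + o(\delta^2)$. This yields the announced bound.

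\textbf{Main obstacle.} The genuinely delicate point is verifying that the linear-in-$\delta$ correction is exactly $-C_1\kappa(s)|f|^2$ with the \emph{same} universal constant $C_1$ that appears in Theorem~\ref{thm.main}. Collecting the three contributions (from $a^{-1}$, from the cross-term in $(\tilde{A}_s+\tau_0)^2$, and from $a|\phi'_{\tau_0}|^2$) and reducing them to the required single expression via the de Gennes ODE and the boundary value $\phi_{\tau_0}(0)$ is the core calculation; the structural fact that the same $C_1$ governs both a singular (corner) and a smooth (curvature) perturbation reflects the common underlying fiber problem and is the conceptual heart of the unified formula. A secondary technical issue is the mismatch of scales (the profile $f$ decays on scale $1/\delta$ while $\kappa$ is compactly supported), but this only enters the remainder estimates and is controlled by standard decaying-exponential computations once the leading order has been identified.
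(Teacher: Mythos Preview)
Your proposal is correct and follows essentially the same route as the paper: tubular coordinates, a tensorized trial state $f_\star(t)\,g(s)$ (with the phase absorbed into the gauge), expansion of the curvature terms to order $\delta$ to extract the coefficient $-C_1$ via the de\,Gennes identities, and then the weak-coupling trial $g(s)=e^{-\mu|s|}$ optimised at $\mu=\tfrac12\delta C_1\!\int\kappa$. The paper is simply more explicit about the two identities \eqref{eq.magic}--\eqref{eq.magicbis} that collapse the three $O(\delta)$ contributions into $-C_1$, and about the cutoff scale $\ell=\delta^{-\rho}$ controlling the $t$-localisation errors; these are exactly the points you flag as the main obstacle and the secondary technical issue.
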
	

Our analysis suggests that there is only one simple eigenvalue below $\Theta_0-C\delta^{\frac12}$, for some constant $C>0$.  This would follow from a dimensional reduction as in \cite{FH06,  HK17} or in the Grushin spirit (see \cite{BHR22}). The question of estimating the number of bound states (below $\Theta_0$) remains open. This requires the derivation of a very precise operator near the threshold of the essential spectrum, what is a quite interesting problem sharing similar features as in \cite{BMR14}.

\begin{remark}
Actually, Theorem \ref{thm.main} can be seen as a \emph{formal} consequence of Theorem \ref{thm.main*}.	Indeed, it is possible to exhibit a normal parametrization of $\partial\mathcal{C}_\delta$ by considering
\[\gamma_\delta(s)=(s\mathds{1}_{\R_+}(s)+s\cos\delta\mathds{1}_{\R_-}(s),-s\sin\delta\mathds{1}_{\R_-}(s))\,.\]
In the sense of distributions, we have
\[\gamma''_\delta=(1-\cos\delta,\sin\delta)\mathfrak{d}_0=2\sin\left(\frac\delta2\right)\mathbf{n}_\delta\mathfrak{d}_0\,,\quad \mathbf{n}_\delta=(\sin(\delta/2),\cos(\delta/2))\,,\]
where $\mathfrak{d}_0$ is the Dirac distribution at $0$ and $\mathbf{n}_{\delta}$ is the direction of the bisector of $\mathcal{C}_\delta$. Formally, the curvature is $\kappa_\delta=2\sin\left(\frac\delta2\right)\mathfrak{d}_0$.

\end{remark}

\subsection*{Organization of the article}

In Section \ref{sec.2}, we introduce the constants $\Theta_0$ and $C_1$, related to the de Gennes operator \eqref{eq.deGennes}. Section \ref{sec.3} is devoted to the proof of Theorem \ref{thm.main}, whereas Section \ref{sec.4} deals with that of Theorem \ref{thm.main*}. In both situations, the proof follows by construction of an appropriate trial state. In the corner case, the phase of the trial state (see \eqref{eq.trialcorner}) is reminiscent of the construction appearing in the non-linear setting of \cite{CG}, while the amplitude (see \eqref{eq.w-L}) is obtained by minimizing a new energy functional in \eqref{eq.opt}. In the regular case, we use a tensorized trial state in curvilinear coordinates (see \eqref{eq.trialreg}) involving the bound state of a 1D model operator studied in \cite{Simon} and revisited in Appendix \ref{sec.app}.

\section{The de\,Gennes operator and the constant $C_1$}\label{sec.2}
The material of this section is standard and only included for convenience and to fix notation. We refer to \cite{FH10} for more material and reference to earlier works.
The constants $\Theta_0$ and $C_1$ in Theorems~\ref{thm.main} and \ref{thm.main*} are defined starting from a  family of 1D harmonic oscillators on the  half axis.  For all  $\xi\in\R$,  let us  denote by  $\mu(\xi)$ the first eigenvalue   of the operator
\begin{equation}\label{eq.deGennes}
H_\xi=-\frac{d^2}{dt^2}+(t-\xi)^2\quad{\rm in}~L^2(\R_+),
\end{equation}
with Neumann boundary condition at $0$, $u'(0)=0$. 

\noindent We introduce the de\,Gennes constant
\begin{equation}\label{eq:dG}
\Theta_0=\inf_{\xi\in\R}\mu(\xi)\,.\end{equation}
We know that $\frac12<\Theta_0<1$  and  there exists a unique $\xi_0$ such that
\[\Theta_0=\mu(\xi_0)\,.\]
Furthermore, $\xi_0=\sqrt{\Theta_0}$ and $\mu''(\xi_0)>0$.  Let us denote by $\DG$ the positive normalized ground state of $\Theta_0$,  i.e.
\[ H_{\xi_0}\DG=\Theta_0\DG,\quad \DG>0,\quad \DG'(0)=0,\quad \int_{\R_+}|\DG(t)|^2\dd t=1. \]
We introduce the constant $C_1$ as follows
\begin{equation}\label{eq.C1}
C_1=\frac{|\DG(0)|^2}3\,.
\end{equation}
The function $\DG$ belongs to $\mathcal S(\R_+)$ and decays exponentially at infinity.  
It  satisfies the additional property (Feynman-Hellmann)
\begin{equation}\label{eq.FeyHel}
\int_{\R_+}(t-\xi_0)|\DG(t)|^2\dd t=0\,.
\end{equation}
Noticing that
\[\begin{split}
\int_0^{+\infty}t|\DG'(t)|^2\mathrm{d}t&=-\int_0^{+\infty}(t\DG')' \DG\mathrm{d}t=\int_0^{+\infty}(t(-\DG'')\DG-\DG'\DG)\mathrm{d}t\\
&=\int_0^{+\infty}(\Theta_0t-t(\xi_0-t)^2)|\DG(t)|^2\mathrm{d}t+\frac{\DG(0)^2}{2}\,,
\end{split}\]
 we get, using \eqref{eq.C1}, the interesting identity
\begin{equation}\label{eq.magic}
\int_{\R_+}\big( |f'_{\star}(t)|^2+(\xi_0-t)^2|\DG(t)|^2-\Theta_0|\DG (t)|^2\big)t\dd t= \frac{3C_1}2\,.
\end{equation}
Another interesting identity is
\begin{equation}\label{eq.magicbis}
\int_{\R_+}(t-\xi_0)t(t-2\xi_0)|f_{\star}(t)|^2{\mathrm d}t=\frac{C_1}{2} \,,
\end{equation}
which follows by writing
\[ (t-\xi_0)(t-2\xi_0)t=(t-\xi_0)^3-\xi_0^2(t-\xi_0)\,,\]
using \eqref{eq.FeyHel} and the formula (see \cite[Lemma 3.2.7]{FH10}):
\[\int_{\R_+}(t-\xi_0)^3|f_{\star}(t)|^2{\mathrm d}t=\frac{C_1}2\,.\]
For a positive number $\ell$,  let us introduce the function
\begin{equation}\label{eq.f-ell}
f_\ell(t)={ \zeta}\left(\frac{t}{\ell}\right)\DG(t)
\end{equation} 
where ${\zeta}\in  \mathscr{C}_0^\infty(\R)$ satisfies
\begin{equation}\label{eq.eta}
0\leq {\zeta}\leq 1,\quad {\rm supp}\,{\zeta}\subset[-1,1],\quad {\zeta}=1{\rm~on~}\big[-\frac12,\frac12\big].
\end{equation}
Consequently, as $\ell\to+\infty$, we have
\begin{equation}\label{eq.en-f-ell}
\begin{aligned}\int_{\R_+}|f_\ell(t)|^2\dd t&=1+\mathcal O(\ell^{-\infty}),\\
q(f_\ell)&:=\int_{\R_+}\big(|f_\ell'(t)|^2+(t-\xi_0)^2|f_\ell(t)|^2 \big)\dd t=\Theta_0+\mathcal O(\ell^{-\infty})\,,\\
\int_{\R_+}(t-\xi_0)^k|f_\ell(t)|^2\dd t&=\int_{\R_+}(t-\xi_0)^k|f_{\star}(t)|^2\dd t+\mathcal O(\ell^{-\infty})\,,
\end{aligned}
\end{equation}
where $\mathcal O(\ell^{-\infty})$ denotes a quantity equal to $\mathcal O(\ell^{-N})$ for all $N>0$.\\
Since $\DG\in\mathcal S(\R_+)$, we deduce  the following two identities from \eqref{eq.magic}, which will useful below in our proof of Theorem~\ref{thm.main},
\begin{equation}\label{eq.moment}
\begin{aligned}
\int_{\R_+}\big( |f'_\ell(t)|^2+(\xi_0-t)^2|f_\ell (t)|^2-\Theta_0|f_\ell(t)|^2\big)t\dd t&=\frac{3C_1}2+\mathcal O(\ell^{-\infty})\,,\\
\int_{\R_+}(t-\xi_0)(t-2\xi_0)|f_{\ell}(t)|^2t{\mathrm d}t&=\frac{C_1}{2}+\mathcal O(\ell^{-\infty})\,.
\end{aligned}
\end{equation}
\section{Almost flat  sectors}\label{sec.3}

This section is devoted to the proof of Theorem~\ref{thm.main}, so $\Omega_\delta=\mathcal C_\delta$ hereafter.  
The proof is by construction of a quasi-mode having approximately the form (after truncation)
\[ \Psi^{\rm tr}\approx \DG(t)e^{i\Phi(s,t)}\]
 where $s$ denotes the tangential variable  along $\partial\mathcal C_\delta$ and $t$  denotes the transversal variable.  For  instance,  $(s,t)=(x_1,x_2)$  when  $\delta=0$ (in which  case $\mathcal C_\delta$  is the half-plane $\R\times\R_+$).
 
The phase  term $\Phi$  is,  up to symmetry considerations,  a perturbation of $i\xi_0 s$. As already mentionned, the idea of perturbing the phase term in an almost flat sector was first introduced in the non-linear framework of the Ginzburg-Landau functional \cite{CG}.  We use the same construction and add to this by proving that the phase term proposed in \cite{CG} is rather the optimal choice. Interestingly, we determine the best truncation profile by minimizing a non-linear functional, which allows us to capture the $\delta^2$-term of Theorem \ref{thm.main}.

\subsection{Geometric framework}	
We denote by $T^+$ the following trapezoid: 
\[ T^+:=\{\mathbf{x}\in(0,+\infty)\times(0,\ell) : x_2\tan(\delta/2)<x_1\}\,.\]	
Consider the angle bisector
\[D_\delta=\{\mathbf{x}\in\R^2 : x_1=x_2\tan(\delta/2)\}\,.\]
We denote by $S_\delta$ the reflection in the line $D_\delta$, whose matrix is
\[S_\delta=\begin{pmatrix}
-\cos\delta&\sin\delta\\
\sin\delta&\cos\delta
\end{pmatrix}\,.\]	
We denote by $T^{-}$ the reflection of $T^{+}$, i.e.,
\[ T^{-} := S_{\delta} T^{+}. \]

	\begin{figure}[ht!]
	\begin{tikzpicture}[line cap=round,line join=round,>=triangle 45,x=0.6cm,y=0.6cm]
		\clip(-9,-2) rectangle (13,10);
		\draw [line width=2.pt,domain=-12:0.0] plot(\x,{(-0.--0.848*\x)/-2.172});
		\draw [dashed, line width=1.pt, color=bblue, domain=0:17.308] plot(\x,{(-0.--0.9827311245774308*\x)/0.1850392844419221});
		\draw [dashed, line width=1.pt,domain=-11.972:0.0] plot(\x,{(-0.--5.*\x)/-2.});
		\draw [dashed, line width=1.pt,domain=0.0:17.30800000000001] plot(\x,{(-0.--3.930229175769762*\x)/3.6814804937590164});
		\draw [line width=2.pt,domain=0.0:17.30800000000001] plot(\x,{(-0.-0.*\x)/3.});
		\draw [line width=1.pt,color=zzttqq] plot[domain=2.7693689797066257:3.141592653589793,variable=\t]({1.*7.702886391762553*cos(\t r)+0.*7.702886391762553*sin(\t r)},{0.*7.702886391762553*cos(\t r)+1.*7.702886391762553*sin(\t r)});
		\draw [line width=1.pt,color=zzttqq] plot[domain=0.:0.8180662757993642,variable=\t]({1.*3.*cos(\t r)+0.*3.*sin(\t r)},{0.*3.*cos(\t r)+1.*3.*sin(\t r)}) ;
		\draw [dashed, line width=1.pt,domain=-11.972:0] plot(\x,{(-0.-0.*\x)/3.});
		\draw [dashed, line width=1.pt,domain=0:17.308] plot(\x,{(-0.-0.*\x)/3.});
		\draw [line width=2.pt,domain=1.1:17.308] plot(\x,{(--18.-0.*\x)/3.});
		\draw [line width=1.pt,color=zzttqq]  plot[domain=0.8180662757993641:1.3846844898533128,variable=\t]({1.*8.2211462481647*cos(\t r)+0.*8.2211462481647*sin(\t r)},{0.*8.2211462481647*cos(\t r)+1.*8.2211462481647*sin(\t r)}) ;
		\draw [<->,line width=1.pt] (12.,0.) -- (12.,6.);
		\draw [line width=2.pt,domain=-11.972:1.1297451346409] plot(\x,{(-26.943048179496365--1.6331426637797248*\x)/-4.1830022001527905});
		\draw [fill=uuuuuu] (0.,0.) circle (2.0pt);
		\draw[color=zzttqq] (-8,1.7) node {\Large $\delta$};
		\draw[color=zzttqq] (3.9,1.5) node {\Large $\frac{\pi-\delta-\gamma}{2}$};
		\draw [fill=uuuuuu] (5.620253164556961,6.) circle (2.0pt);
		\draw[color=zzttqq] (4.2,8) node {\Large $\frac{\gamma}{2}$};
		\draw[color=black] (11,3) node {\Large$\ell$};
		\draw[color=bblue] (0.5,9.5) node {\Large$D_\delta$};
		\draw[color=black] (7,4) node {\Large$T^+_{\delta,\gamma}$};
		\draw[color=black] (-6,5) node {\Large$T^-_{\delta,\gamma}$};
		\draw[color=black] (2,4) node {\Large$V^+_{\delta,\gamma}$};
		\draw[color=black] (-.7,5) node {\Large$V^-_{\delta,\gamma}$};
		\draw [fill=uuuuuu] (1.1297451346409,6.) circle (2.0pt);
		\draw [fill=uuuuuu] (-3.0532570655118905,7.633142663779725) circle (2.0pt);
	\end{tikzpicture}
\caption{Geometric setting: The figure illustrates the sector cut-off at height $\ell$. Here $T^+=T^+_{\delta,\gamma} \cup V^+_{\delta,\gamma}$ and similarly for $T^{-}$. The symmetry axis $D_\delta$ is drawn in blue.}\label{fig1}
\end{figure}
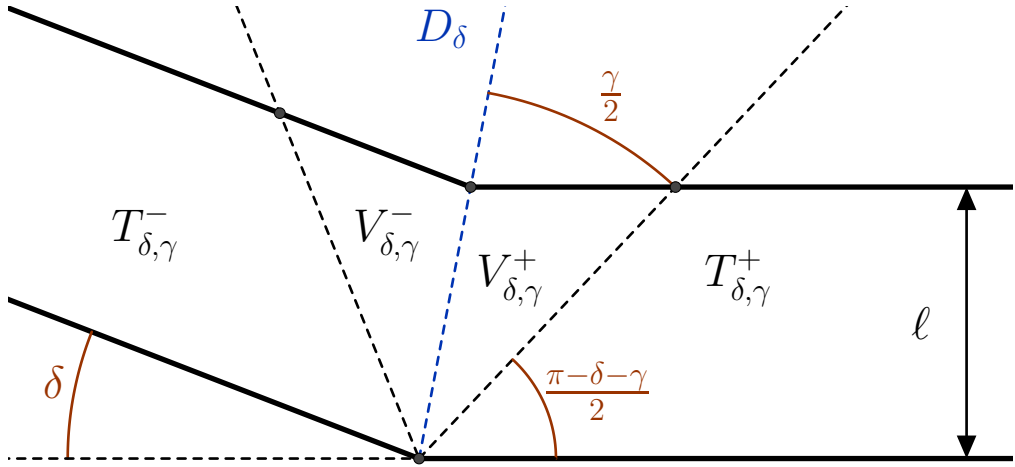
	
\subsection{Towards a test function}\label{sec.sym}
Let us try to define a test function compatible with the symmetry and the magnetic field. Let us  consider a function $u_+$ such that
\begin{equation}\label{eq.E+}
E_+:=\int_{T^+}|(-i\nabla+\mathbf{A})u_+|^2\dd\mathbf{x}<+\infty\,.
\end{equation}
Now, we want to extend $u_+$ by using the symmetry. We wish to do this in such a way that the magnetic energy on $T^-$ coincides with the one on $T^+$. 
\begin{lemma}\label{lem.symm}
Considering
\begin{equation}\label{eq:def_phi}
\phi(\mathbf{y})=\frac{\sin(2\delta)}{4}(y_1^2-y_2^2)-y_1 y_2\sin^2\delta
\end{equation}
and	
\begin{equation}\label{eq.u-}
u_-(\mathbf{x})=e^{-i\phi(S_\delta\mathbf{x})}\overline{u}_+(S_\delta\mathbf{x})\,,
\end{equation}
we have
\begin{equation}\label{eq:equal_energies}
\int_{T^+}|(-i\nabla+\mathbf{A})u_+|^2\dd\mathbf{x}=\int_{T^-}|(-i\nabla+\mathbf{A})u_-|^2\dd\mathbf{x}\,.
\end{equation}
\end{lemma}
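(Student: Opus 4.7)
The plan is to reduce \eqref{eq:equal_energies} to the pointwise identity
\[
|(-i\nabla+\mathbf{A})u_-(\mathbf{x})|^2=|(-i\nabla+\mathbf{A})u_+(\mathbf{y})|^2,\qquad \mathbf{y}=S_\delta\mathbf{x},
\]
after which integrating and performing the change of variables $\mathbf{x}\mapsto S_\delta\mathbf{x}$ (whose Jacobian is $|\det S_\delta|=1$ and which bijects $T^-$ onto $T^+$) immediately yields the claim.

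Conceptually, the formula for $u_-$ combines three elementary operations applied to $u_+$: a pullback by the reflection $S_\delta$, complex conjugation, and a gauge transformation by $e^{-i\phi(S_\delta\mathbf{x})}$. The pullback by $S_\delta$ sends $\mathbf{A}$ to $S_\delta\mathbf{A}(S_\delta\cdot)$, which generates magnetic field $-1$ (reflections being orientation-reversing); complex conjugation then flips the potential to its opposite and brings the field back to $+1$. Since both $-S_\delta\mathbf{A}(S_\delta\cdot)$ and the target $\mathbf{A}$ generate field $+1$ on the simply connected domain $T^-$, they must differ by a gradient, and the role of $e^{-i\phi}$ is precisely to absorb this difference.

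To make this rigorous, I would expand $(-i\nabla+\mathbf{A})u_-(\mathbf{x})$ via the product and chain rules. Using $\nabla(\phi\circ S_\delta)(\mathbf{x})=S_\delta\nabla\phi(\mathbf{y})$ (which follows from $S_\delta^T=S_\delta$), the phase factor $e^{-i\phi(S_\delta\mathbf{x})}$ can be pulled outside at the cost of replacing $\mathbf{A}(\mathbf{x})$ by $\mathbf{A}(\mathbf{x})-S_\delta\nabla\phi(\mathbf{y})$. Next, the gradient of $\overline{u_+(S_\delta\mathbf{x})}$ equals $S_\delta\overline{(\nabla u_+)(\mathbf{y})}$, and the complex conjugation trick gives $|(-i\nabla+\mathbf{B})\bar w|^2=|(-i\nabla-\mathbf{B})w|^2$ for any real potential $\mathbf{B}$ and smooth $w$. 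Combining this with the fact that $S_\delta$ is a Euclidean isometry on $\mathbf{C}^2$, the pointwise identity is seen to be equivalent to
\[
\nabla\phi(\mathbf{y})=\mathbf{A}(\mathbf{y})+S_\delta\mathbf{A}(S_\delta\mathbf{y}).
\]

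It then remains to check this last identity by direct algebra. From $\mathbf{y}=(y_1,y_2)$ one finds $(S_\delta\mathbf{y})_2=\sin\delta\,y_1+\cos\delta\,y_2$, so $\mathbf{A}(S_\delta\mathbf{y})=(-\sin\delta\,y_1-\cos\delta\,y_2,0)$; multiplying by $S_\delta$, adding $\mathbf{A}(\mathbf{y})=(-y_2,0)$, and simplifying with $2\sin\delta\cos\delta=\sin(2\delta)$ together with $\cos^2\delta-1=-\sin^2\delta$, one recovers precisely $\nabla\phi$ for the quadratic $\phi$ defined in \eqref{eq:def_phi}. The only delicate point in the argument is the sign bookkeeping arising from the interplay of conjugation, reflection and gauge; once that dictionary is fixed, the rest is a short calculation.
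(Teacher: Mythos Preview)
Your proof is correct and follows essentially the same route as the paper's: change of variables by the reflection $S_\delta$, complex conjugation, and a gauge transformation absorbing the gradient $\nabla\phi$. The key identity you reduce to, $\nabla\phi(\mathbf{y})=\mathbf{A}(\mathbf{y})+S_\delta\mathbf{A}(S_\delta\mathbf{y})$, is exactly the paper's $\tilde{\mathbf{A}}-(y_2,0)=\nabla\phi$ (since $(y_2,0)=-\mathbf{A}(\mathbf{y})$); the only organizational difference is that the paper starts from the integral over $T^-$ and works backwards to discover what $u_-$ should be, whereas you start from the given formula for $u_-$ and verify the pointwise identity directly.
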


\begin{proof}
For a given function $u_{-}$ on $T^{-}$, we
use the change of variable given by $\mathbf{x}=S_\delta\mathbf{y}$ (remember that $S_\delta=S_\delta^{-1}=S_\delta^*$) and notice that
\[\int_{T^-}|(-i\nabla+\mathbf{A})u_-|^2\dd\mathbf{x}=\int_{T^+}|(-i\nabla_{\mathbf{y}}+\tilde{\mathbf{A}}(\mathbf{y})) \overset{\circ}{u}_-|^2\dd\mathbf{y}\,,\]
with
\[\tilde{\mathbf{A}}=S_\delta(\mathbf{A}\circ S_\delta)\,,\quad \overset{\circ}{u}_-=u_-\circ S_\delta(\mathbf{y})\,. \]
A straightforward computation gives
\[\tilde{\mathbf{A}}(\mathbf{y})=S_\delta\begin{pmatrix}
-y_1\sin\delta-y_2\cos\delta\\
0
\end{pmatrix}=\begin{pmatrix}
\frac{y_1}{2}\sin(2\delta)+y_2\cos^2\delta\\
-y_1\sin^2\delta-\frac{y_2}{2}\sin(2\delta)
\end{pmatrix}\,.\]	
The magnetic field associated with $\tilde{\mathbf{A}}$ is	
\[\partial_{y_1}\tilde A_2-\partial_{y_2}\tilde A_1=-1\,.\]	
Let us consider the function $\phi$ defined in \eqref{eq:def_phi}.
It satisfies
\[\tilde{\mathbf{A}}-\begin{pmatrix}
y_2\\
0
\end{pmatrix}=\nabla\phi\,.\]
Therefore,
\[\begin{split}
	\int_{T^-}|(-i\nabla+\mathbf{A})u_-|^2\dd\mathbf{x}&=\int_{T^+}\left|\left(-i\nabla_{\mathbf{y}}+\begin{pmatrix}
	y_2\\
	0
\end{pmatrix}\right)e^{i\phi(\mathbf{y})}\overset{\circ}{u}_-(\mathbf{y})\right|^2\dd\mathbf{y}\\
&=\int_{T^+}\left|\left(-i\nabla_{\mathbf{y}}+\mathbf{A}(\mathbf{y})\right)e^{-i\phi(\mathbf{y})}v_-(\mathbf{y})\right|^2\dd\mathbf{y}\,,\quad \text{ with } v_-=\overline{\overset{\circ}{u}}_-\,,
\end{split}\]
so that \eqref{eq:equal_energies} holds if
\[e^{-i\phi(\mathbf{y})}\overline{\overset{\circ}{u}}_-(\mathbf{y})=u_+(\mathbf{y})\,,\]
or, equivalently,
\[u_-(\mathbf{x})=e^{-i\phi(S_\delta\mathbf{x})}\overline{u}_+(S_\delta\mathbf{x})\,.\]
\end{proof}
\begin{remark}
If we choose
\[u_+(x_1,x_2)=f(x_2) e^{i\xi_0 x_1}\,,\]
with a real-valued $f$ in the Schwartz class, \eqref{eq.u-} gives
\[u_-(\mathbf{x})=f(x_1\sin\delta+x_2\cos\delta)e^{-i\xi_0(-x_1\cos\delta+x_2\sin\delta)}e^{-i\phi(\mathbf{x})}\,,\]
since $\phi(S_\delta\mathbf{x})=\phi(\mathbf{x})$. Of course, this choice of $u_+$ is not appropriate since \eqref{eq.E+} is not satisfied due to the lack of integrability with respect to $x_1$. However, up to using a cutoff function with respect to $x_1$, this gives a rather good idea of the shape of our test function in $T^+$.
Now, if we consider
\[u(\mathbf{x})=\left\{\begin{array}{llc}
	f(x_2)e^{i\xi_0 x_1}\,,&\mbox{ if }\mathbf{x}\in T^+\\
	f(x_1\sin\delta+x_2\cos\delta)e^{i\xi_0(x_1\cos\delta-x_2\sin\delta)}e^{-i\phi(\mathbf{x})}\,,&\mbox{ if }\mathbf{x}\in T^-
\end{array}\right.\,,\]
we see that $u$ does not belong to $H^1$ near the symmetry axis $D_\delta$ due to the phase shift, which does not vanish on $D_\delta$. In the next section, we slightly modify this function $u$ to solve this inconvenience. 
\end{remark}

\subsection{Smoothing the transition near $D_\delta$}
For the given $\delta$, we let $0<\gamma<\pi - \delta$ and define the following trial state (in polar coordinates)
\begin{equation}\label{eq.trialcorner}
\Psi(r\cos\theta,r\sin\theta)=\left\{\begin{array}{llc}
f(r\sin\theta)e^{i\xi_0 r\cos\theta},&\mbox{ if } (r\cos\theta,r\sin\theta)\in T^+_{\delta,\gamma},\\
f(r\sin\theta)e^{i\alpha(r,\theta)},&\mbox{ if }(r\cos\theta,r\sin\theta)\in V^+_{\delta,\gamma},\\
f(r\sin(\theta+\delta))e^{i\alpha(r,\theta)},&\mbox{ if }(r\cos\theta,r\sin\theta)\in V^-_{\delta,\gamma},\\
f(r\sin(\theta+\delta))e^{i\xi_0 r\cos(\theta+\delta)-ir^2\phi(\cos\theta,\sin\theta)},&\mbox{ if }(r\cos\theta,r\sin\theta)\in T^-_{\delta,\gamma},\\
0, & \text{ else},
\end{array}\right.
\end{equation}
where (cf. Fig.\ref{fig1}):
\begin{enumerate}[---]
\item $V^+_{\delta,\gamma}$ is the sector defined in polar coordinates by $0<r<r_*:=\frac{\ell}{\cos\left(\frac{\delta+\gamma}{2}\right)}$ and $\theta\in\left(\theta_\delta-\frac{\gamma}{2},\theta_\delta\right)$ and $V^-_{\delta,\gamma}=S_\delta V^+_{\delta,\gamma}$, with $\theta_\delta=\frac{\pi-\delta}{2}$.

\item the trapezoids $T^+_{\delta,\gamma}$ and $T^-_{\delta,\gamma}$ are given by
\[T^+_{\delta,\gamma}=\left\{\mathbf{x}\in(0,+\infty)\times(0,\ell) : x_2\tan\left(\frac{\delta-\gamma}{2}\right)<x_1\right\}\]
and $T^-_{\delta,\gamma}=S_\delta T^+_{\delta,\gamma}$.	
\end{enumerate}
The function $f$ is given by $f=f_{\ell}$, see \eqref{eq.f-ell}, and the phase $\alpha$ is chosen so that the function $\Psi$ belongs to $H^1_{\mathrm{loc}}(\mathcal{C}_\delta)$. Let us give an explicit choice of $\alpha$. Note that
\[\phi(\cos\theta,\sin\theta)=\frac{\sin\delta}{2}\cos(2\theta+\delta)\,,\]
and consider the two phases
\[\alpha_+(r,\theta)=r\xi_0\cos(\theta)\,,\qquad \alpha_-(r,\theta)=r\xi_0\cos(\theta+\delta)-\frac{r^2}{2}\sin\delta\cos(2\theta+\delta) \,.\]
Notice that the transition zone is simply given by $\theta\in\left(\theta_\delta-\frac{\gamma}{2},\theta_\delta+\frac{\gamma}{2}\right)$ and that we have
\[\alpha_+\left(r,\theta_\delta-\frac{\gamma}{2}\right)=r\xi_0\sin\left(\frac{\delta+\gamma}{2}\right)\]
and
\[\alpha_-\left(r,\theta_\delta+\frac{\gamma}{2}\right)=-r\xi_0\sin\left(\frac{\delta+\gamma}{2}\right)+\frac{r^2}{2}\sin\delta\cos\gamma\,.\]
This leads to the choice
\begin{equation}\label{eq:alpha_def}
	\alpha(r,\theta)=br^2-\chi_{\delta,\gamma}(\theta)\left(ar-br^2\right)\,,
\end{equation}
with 
\[a=\xi_0\sin\left(\frac{\delta+\gamma}{2}\right)\,,\quad b=\frac14\sin\delta\cos\gamma\,,\]
and 
\[\chi_{\delta,\gamma}(\theta)=\chi\left(\frac{2(\theta-\theta_\delta)}{\gamma}\right)\,,\]
where $\chi$ is a  smooth  odd function such that $\chi(t)=1$ for $t\geq 1$.

\begin{remark}
The function $\Psi$ has no decay in the $x_1$-direction, so it will be complemented by a well-chosen cut-off in that variable.
\end{remark}

\subsection{Estimate of the energy}
We consider a function $\eta_+\in H^1(\R_+)$ equal to $1$ in a  fixed  neighborhood of $0$ (so that $(x_1,x_2)\mapsto\eta_+(x_1)$ equals 1 on $V^+_{\delta,\gamma}$ as soon as $\delta$ and $\gamma$ are small enough).  This function will be chosen later on in Section~\ref{sec.eta}.

The aim of this section is to establish the following proposition valid in the regime where $\ell\to+\infty$ and $(\delta,\gamma)\to0$ provided that $\delta=o(\gamma)$.
 
\begin{proposition}\label{prop.ubenergy}
For small enough values of $\delta, \gamma, \ell^{-1}$ and under the assumption that $\delta \leq \gamma$, we have
\begin{multline*}
\int_{T^+}   |\eta_+(x_1)|^2|(-i\nabla+\mathbf{A})\Psi|^2\mathrm{d}\mathbf{x}\leq (\Theta_0+\mathcal{O}(\ell^{-\infty})) \|\eta_+\|_{L^2(\R_+)}^2- J\frac{\delta}{2}\\
+\mathcal O (\gamma^{-1}\delta^2) +\mathcal{O}(\gamma^3)+\mathcal{O}(\ell^{-\infty})\,,
\end{multline*}
	 with 
	 \begin{equation}\label{eq.J}
	 J=\int_0^{+\infty}\left(|f'|^2+(t-\xi_0)^2 |f(t)|^2\right)t\mathrm{d}t
	 -\int_0^{+\infty}(t-\xi_0)(t- 2\xi_0)t|f(t)|^2\mathrm{d}t\,.
	 \end{equation}
	 The remainder terms $\mathcal O (\gamma^{-1}\delta^2)$, $\mathcal O(\ell^{-\infty})$ and $\mathcal O(\gamma^3)$ are controlled uniformly w.r.t. the function $\eta_+$.
\end{proposition}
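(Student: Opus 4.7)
\emph{Plan.} I would split $T^+=T^+_{\delta,\gamma}\sqcup V^+_{\delta,\gamma}$ and estimate the two pieces separately. On the trapezoidal bulk the trial state is simply $\Psi(x_1,x_2)=f(x_2)e^{i\xi_0 x_1}$, so
\[
|(-i\nabla+\mathbf{A})\Psi|^2 = |f'(x_2)|^2+(\xi_0-x_2)^2|f(x_2)|^2
\]
depends only on $x_2$. By Fubini the bulk contribution splits into the $x_1$-integral, which equals $\|\eta_+\|_{L^2(\R_+)}^2-x_2\tan((\delta+\gamma)/2)+\mathcal O((\delta+\gamma)^3)$ (valid since $\eta_+\equiv 1$ near $0$ and $x_2\leq\ell$ for small $\delta,\gamma$), times the $x_2$-integral of the de\,Gennes density, which is $\Theta_0+\mathcal O(\ell^{-\infty})$ by \eqref{eq.en-f-ell}. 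The $x_2$-weighted variant needed for the linear-in-$(\delta+\gamma)$ correction is controlled by the first line of \eqref{eq.moment}. This already produces the main $\Theta_0\|\eta_+\|^2$ term and a $-\tfrac{\delta+\gamma}{2}\int x_2[|f'|^2+(\xi_0-x_2)^2|f|^2]\,dx_2$ contribution.

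On the transition sector $V^+_{\delta,\gamma}$ I pass to polar coordinates and write $\Psi=u\,e^{i\alpha}$ with $u=f(r\sin\theta)$ real. The standard polar decomposition
\[
|(-i\nabla+\mathbf{A})\Psi|^2 = |\nabla u|^2 + u^2\,|\nabla\alpha+\mathbf{A}|^2
\]
reduces the task to expanding $\alpha_r,\alpha_\theta$ against the polar components of $\mathbf{A}$ (namely $\mathbf{A}\cdot e_r=-r\sin\theta\cos\theta$ and $\mathbf{A}\cdot e_\theta=r\sin^2\theta$). Since $x_1=r\cos\theta\leq r_*\sin((\delta+\gamma)/2)$ is small on the sector, $\eta_+\equiv 1$ throughout and drops out of the estimate. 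The substitution $\tau=2(\theta-\theta_\delta)/\gamma\in(-1,0)$ pulls a factor $\gamma/2$ out of $d\theta$ and absorbs the $1/\gamma$ produced by $\partial_\theta\chi_{\delta,\gamma}$; combined with $t=r\sin\theta$, the integrand factorizes into dimensionless $\tau$-integrals of $\chi,\chi',\chi^2,(\chi')^2$ and $t$-integrals of $|f|^2, t|f|^2, t(t-\xi_0)^k|f|^2$, which are evaluated via \eqref{eq.FeyHel}, \eqref{eq.magic}, \eqref{eq.magicbis}.

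Summing the two pieces, the $\gamma$-linear piece of the bulk correction cancels with the leading $\gamma$-piece of the sector contribution (morally, the sector restores in the $\xi_0 x_1$ gauge the triangular slice missing from $T^+_{\delta,\gamma}$), while the genuine $\delta$-modulation of $\alpha$ produces the second integral appearing in \eqref{eq.J}. Assembled via the definition of $J$, the net result is the advertised bound $\Theta_0\|\eta_+\|^2-J\delta/2$ with remainders $\mathcal O(\gamma^{-1}\delta^2)$ (from squared $\chi'/\gamma$ pieces that carry a coefficient $b=\tfrac14\sin\delta\cos\gamma$ of order $\delta$), $\mathcal O(\gamma^3)$ (from higher Taylor orders in $\sin((\delta+\gamma)/2)$, etc., under the assumption $\delta\leq\gamma$), and $\mathcal O(\ell^{-\infty})$ (from the tails of $f_\ell$). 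Uniformity in $\eta_+$ follows from $\|\eta_+\|_\infty\leq 1$, which bounds all correction terms.

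The main obstacle is the sector computation. A priori $|\nabla\alpha+\mathbf{A}|^2$ contains $\gamma^{-2}$-singular terms from the factor $\chi'/\gamma$ in $\alpha_\theta$. These are tamed by two structural facts: the singular piece of $\alpha_\theta$ is proportional to $r$ with coefficient $a=\xi_0\sin((\delta+\gamma)/2)$ of order $\delta+\gamma$, so its pairing with $\mathbf{A}\cdot e_\theta\approx r$ yields only $O(1)$ contributions, and the squared singular term inherits a $b$-coefficient of order $\delta$, producing precisely the $\mathcal O(\gamma^{-1}\delta^2)$ remainder. Tracking these cancellations cleanly and identifying the surviving $\tau$- and $t$-integrals as the two defining pieces of $J$ is the technical heart of the argument.
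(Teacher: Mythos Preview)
Your plan is essentially the paper's own argument: split $T^+=T^+_{\delta,\gamma}\sqcup V^+_{\delta,\gamma}$ and treat the two pieces separately (the paper records them as two lemmas). Your direct Fubini computation of the bulk is equivalent to the paper's rectangle--minus--triangle approach (the triangle is done there in polar coordinates), and on the sector your decomposition $|(-i\nabla+\mathbf{A})(u e^{i\alpha})|^2=|\nabla u|^2+u^2|\nabla\alpha+\mathbf{A}|^2$ does the same job as the paper's preliminary gauge change to $\mathbf{A}_0=\tfrac12(-x_2,x_1)$.

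There is, however, one genuine gap. The $\gamma$-cancellation you invoke between the bulk correction and the leading sector piece is \emph{not automatic}. After the substitution $\tau=2(\theta-\theta_\delta)/\gamma$ the sector energy contains the term
\[
\frac{\gamma}{2}\Big(\int_{-1}^{0}(\chi')^2\,d\tau\Big)\int_0^{+\infty}\Big(\xi_0\big(1+\tfrac{\delta}{\gamma}\big)-\tfrac{\delta}{2\gamma}\,t\Big)^2|f(t)|^2\,t\,dt,
\]
and the exact cancellation with the bulk piece $-\tfrac{\gamma}{2}\int(|f'|^2+(t-\xi_0)^2|f|^2)\,t\,dt$ requires $\int_{-1}^{0}(\chi')^2=1$, i.e.\ the specific choice $\chi(x)=x$ on $[-1,1]$. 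The paper makes this optimal choice explicitly at the end of its sector computation; with a generic smooth odd $\chi$ one has $\int_{-1}^{0}(\chi')^2>1$, a positive $O(\gamma)$ excess survives, and the stated remainder $\mathcal O(\gamma^{-1}\delta^2)+\mathcal O(\gamma^3)$ is violated. Your outline should record this choice.

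A smaller point: uniformity in $\eta_+$ does not follow from $\|\eta_+\|_\infty\le 1$, which is never assumed. The correct reason is that $\eta_+\equiv 1$ on all of $V^+_{\delta,\gamma}$ and on the triangle $R\setminus T^+_{\delta,\gamma}$ once $\delta,\gamma$ are small, so $\eta_+$ simply does not enter any of the correction integrals.
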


Proposition \ref{prop.ubenergy} is a consequence of Lemmas \ref{lem.1}, Remark \ref{rem:lem.1}, and \ref{lem.2}.

\begin{lemma}\label{lem.1}
We have the estimate
\[
\int_{V^+_{\delta,\gamma}}|(-i\nabla+\mathbf{A})\Psi|^2\mathrm{d}\mathbf{x}=\frac\gamma2  J_{\delta,\gamma}
+ {\mathcal O}(\gamma^3+ \gamma \ell^{-\infty}) 
\]
where
\[J_{\delta,\gamma}:=
\int_{0}^{\infty}\Big(|f'(t)|^2+\left(t-\left(\xi_0(1+\frac{\delta}{\gamma}) -\frac{\delta}{2\gamma}t\right)\right)^2 |f(t)|^2\Big)t\mathrm{d}t\,.\]
\end{lemma}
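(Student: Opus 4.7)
The plan is to work in polar coordinates $(r,\theta)$ and exploit the product form $\Psi=f(r\sin\theta)e^{i\alpha(r,\theta)}$ on $V^+_{\delta,\gamma}$, with $f=f_\ell$ real. Since $f$ is real, the cross term in the squared magnetic gradient vanishes and one has
\[|(-i\nabla+\mathbf{A})\Psi|^2 = |f'(r\sin\theta)|^2 + f(r\sin\theta)^2\bigl(P_1^2+P_2^2\bigr),\]
where $P_1 := \partial_r\alpha - r\sin\theta\cos\theta$ and $P_2 := \partial_\theta\alpha/r + r\sin^2\theta$ are the radial and angular components of $\mathbf{A}+\nabla\alpha$ in the polar orthonormal frame (using $A_r=-r\sin\theta\cos\theta$, $A_\theta=r\sin^2\theta$). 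The task is therefore to integrate this explicit sum against $r\,dr\,d\theta$ over $\theta-\theta_\delta\in(-\gamma/2,0)$ and $r\in(0,r_*)$.

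The next step is the transverse substitution $t=r\sin\theta$ at fixed $\theta$, yielding $r\,dr\,d\theta = (t/\sin^2\theta)\,dt\,d\theta$, with $t$ ranging in $(0,r_*\sin\theta)$. Since $f_\ell$ has Schwartz-type decay with scale $\ell$ and $r_*\sin\theta\gtrsim\ell$ uniformly in $\theta$ on the relevant sector, I replace the upper $t$-limit by $+\infty$ at the uniform cost $\mathcal O(\ell^{-\infty})$. After the $\psi$-integration over an interval of length $\gamma/2$, this produces the announced $\mathcal O(\gamma\ell^{-\infty})$ remainder.

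The heart of the argument is the expansion of $P_1,P_2$ in the small parameters. Setting $\psi=\theta-\theta_\delta$ and using $\sin\theta=1+\mathcal O(\gamma^2)$, $\cos\theta=(\delta/2-\psi)+\mathcal O(\gamma^3)$, together with $a=\xi_0(\delta+\gamma)/2+\mathcal O(\gamma^3)$ and $2b=\delta/2+\mathcal O(\gamma^3)$, a direct computation from $\partial_r\alpha=2br(1+\chi)-a\chi$ and $\partial_\theta\alpha/r=-(2/\gamma)\chi'(a-br)$ (with $\chi,\chi'$ evaluated at $2\psi/\gamma$), after substituting $r=t/\sin\theta$, gives
\[P_1 = t\psi-\chi\beta+\mathcal O(\gamma^3(1+t)),\qquad P_2 = t-\chi'\eta(t)+\mathcal O(\gamma^2(1+t)),\]
where $\beta:=(\xi_0(\delta+\gamma)-\delta t)/2=\mathcal O(\gamma)$ and $\eta(t):=\xi_0(1+\delta/\gamma)-\delta t/(2\gamma)$ is precisely the effective frequency appearing in $J_{\delta,\gamma}$. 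Because $|P_1|=\mathcal O(\gamma(1+t))$ on the domain, the integrated contribution of $P_1^2$ is $\mathcal O(\gamma^3)$. For $P_2^2$, the change $u=2\psi/\gamma\in(-1,0)$ together with the key identity $\int_{-1}^0\chi'(u)\,du=\chi(0)-\chi(-1)=1$ (and a second-moment identity for $\chi'$ arranged by the choice of cut-off $\chi$) produces the leading contribution $(\gamma/2)(t-\eta(t))^2$. Adding the $|f'|^2$-contribution, which yields $(\gamma/2)\int_0^\infty|f'|^2 t\,dt$ modulo $\mathcal O(\gamma^3)$, and restoring the prefactor $1/\sin^2\theta=1+\mathcal O(\gamma^2)$, one assembles exactly $(\gamma/2)J_{\delta,\gamma}$ up to the claimed remainder. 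The main obstacle is the careful bookkeeping of the many $\mathcal O(\gamma^3)$ terms, as several a priori dangerous contributions of order $\gamma^2$ (coming from the $1/\sin^2\theta$ prefactor, from $\chi$-moments, and from the $P_1\cdot P_2$ interplay) must be tracked and shown to either cancel or be absorbed in the error; this is essentially where the hypothesis $\delta\le\gamma$ is used.
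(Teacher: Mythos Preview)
Your proposal is correct and follows essentially the same route as the paper. The only cosmetic difference is that the paper first gauge-transforms to the symmetric potential $\mathbf{A}_0=\tfrac12(-x_2,x_1)$ (so that the polar expression becomes $|\partial_r\tilde\Psi|^2+r^{-2}|(-i\partial_\theta+\tfrac{r^2}{2})\tilde\Psi|^2$), whereas you compute $P_1,P_2$ directly in the original gauge; the two computations are equivalent. One point worth making explicit: your ``second-moment identity for $\chi'$ arranged by the choice of cut-off'' is precisely the requirement $\int_{-1}^0(\chi')^2\,du=1$, which the paper obtains by taking $\chi(x)=x$ on $[-1,1]$ --- without this specific choice the coefficient in front of $\eta(t)^2$ would not be $1$ and the lemma (stated as an equality) would fail.
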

\begin{remark}\label{rem:lem.1}
Expanding $J_{\delta,\gamma}$, we observe that
\[ \frac{\gamma}{2} J_{\delta,\gamma}=\frac{\gamma}2\int_0^{+\infty}\left(|f'|^2+(t-\xi_0)^2 |f(t)|^2\right)t\mathrm{d}t+\frac{\delta}{2}\int_0^{+\infty}(t-\xi_0)(t-2\xi_0)t|f(t)|^2\mathrm{d}t+
\mathcal O(\gamma^{-1}\delta^2)\,,\]
where we used the exponential decay of $f_\star$ to control the remainder.
\end{remark}
\begin{proof}[Proof of Lemma~\ref{lem.1}]
We have
\[\int_{V^+_{\delta,\gamma}}|(-i\nabla+\mathbf{A})\Psi|^2\mathrm{d}\mathbf{x}=\int_{V^+_{\delta,\gamma}}|(-i\nabla+\mathbf{A}_0)\tilde\Psi|^2\mathrm{d}\mathbf{x}\,,\quad \mathbf{A}_0=\frac12(-x_2,x_1)\,,\]
where $\tilde\Psi=e^{-ix_1x_2/2}\Psi$. We get
\[\int_{V^+_{\delta,\gamma}}|(-i\nabla+\mathbf{A})\Psi|^2\mathrm{d}\mathbf{x}=\int_{\theta_\delta-\frac{\gamma}{2}}^{\theta_\delta}\int_{0}^{r_*}\left(|\partial_r\tilde\Psi|^2+r^{-2}\left|\left(-i\partial_\theta+\frac{r^2}{2}\right)\tilde\Psi\right|^2\right)r\mathrm{d}r\mathrm{d}\theta\,.\]
Then, we write
\[\tilde{\Psi}=e^{i(-r^2\sin(2\theta)/4+\alpha(r,\theta))}f(r\sin\theta)\,,\]
and, using that $f$ is real-valued, we find
\[|\partial_r\tilde\Psi|^2=\sin^2\theta|f'(r\sin\theta)|^2+(-r\sin(2\theta)/2+\partial_r\alpha)^2|f(r\sin\theta)|^2\,,\]
and
\[\begin{split}
\left|\left(-i\partial_\theta+\frac{r^2}{2}\right)\tilde\Psi\right|^2&=(-\frac{r^2}{2}\cos(2\theta)+\partial_\theta\alpha+\frac{r^2}{2})^2|f(r\sin\theta)|^2+r^2\cos^2\theta|f'(r\sin\theta)|^2\\
&=(r^2\sin^2(\theta)+\partial_\theta\alpha)^2|f(r\sin\theta)|^2+r^2\cos^2\theta|f'(r\sin\theta)|^2\,.
\end{split}\]
It follows that
\begin{align}
&\int_{V^+_{\delta,\gamma}}|(-i\nabla+\mathbf{A})\Psi|^2\mathrm{d}\mathbf{x}\nonumber \\
&=\int_{\theta_\delta-\frac{\gamma}{2}}^{\theta_\delta}\int_{0}^{r_*}\Big(|f'(r\sin\theta)|^2 \nonumber\\
&\qquad+\left((-r\sin(2\theta)/2+\partial_r\alpha)^2+(r\sin^2(\theta)+r^{-1}\partial_\theta\alpha)^2\Big)|f(r\sin\theta)|^2\right)r\mathrm{d}r\mathrm{d}\theta\,\nonumber \\
&=\int_{\theta_\delta-\frac{\gamma}{2}}^{\theta_\delta}\int_{0}^{r_*}\Big(|f'(r\sin\theta)|^2+F(r,\theta,\chi_{\delta,\gamma})|f(r\sin\theta)|^2\Big)r\mathrm{d}r\mathrm{d}\theta\,,
\end{align}
where
\[ F(r,\theta,\chi_{\delta,\gamma})=\left(-r\sin(2\theta)/2-\chi_{\delta,\gamma}(\theta)(a-2br)-2br\right)^2+\left(r\sin^2(\theta)-\chi'_{\delta,\gamma}(\theta)(a-br)\right)^2\,.\]
Notice that the interval of integration in $\theta$ has length $\gamma/2 \ll 1$ and $\theta \in [\theta_\delta-\frac{\gamma}{2},\theta_\delta]$ implies
\[ 1 \geq \sin \theta \geq 1- C (\delta-\gamma)^2, \text{ and } 0 \leq \sin(2\theta) \leq \delta + \gamma.\]
Furthermore,
\[a=\xi_0\frac{\gamma+\delta}{2}+\mathcal O(\gamma^3)\,,\quad b=\frac{\delta}{4}+\mathcal O(\gamma^3)\,\]
and
\[ \chi'_{\delta,\gamma}(\theta) = \frac{2}{\gamma} \chi'\left(\frac{2(\theta-\theta_\delta)}{\gamma}\right). \]
Therefore, the first part of $F(r,\theta,\chi_{\delta,\gamma})$ is small, and we get, using the decay of $f$ and $f'$,
\begin{align}
&\int_{V^+_{\delta,\gamma}}|(-i\nabla+\mathbf{A})\Psi|^2\mathrm{d}\mathbf{x}\nonumber \\
&=
\int_{\theta_\delta-\frac{\gamma}{2}}^{\theta_\delta}\int_{0}^{r_*}\Big(|f'(r\sin\theta)|^2+\left(r\sin^2(\theta)-\chi'_{\delta,\gamma}(\theta)(a-br)\right)^2 |f(r\sin\theta)|^2\Big)r\mathrm{d}r\mathrm{d}\theta + {\mathcal O}(\gamma^3) \nonumber \\
&=
\int_{\theta_\delta-\frac{\gamma}{2}}^{\theta_\delta}\int_{0}^{+\infty}\Big(|f'(t)|^2+\left(t-\chi'\left(\frac{2(\theta-\theta_\delta)}{\gamma}\right)\left(\xi_0(1+\frac{\delta}{\gamma}) -\frac{\delta}{2\gamma}t\right)\right)^2 |f(t)|^2\Big)t\mathrm{d}t\mathrm{d}\theta \nonumber \\
&\quad+ {\mathcal O}(\gamma^3+ \gamma \ell^{-\infty}) \nonumber \\
&=
\frac{\gamma}{2} \int_{0}^{+\infty}\Big(|f'(t)|^2 + t^2 |f(t)|^2\Big)t\mathrm{d}t -
\gamma \int_{0}^{+\infty} \left(\xi_0(1+\frac{\delta}{\gamma}) -\frac{\delta}{2\gamma}t\right) |f(t)|^2 t^2\mathrm{d}t\nonumber \\
&\quad+  \frac{\gamma}{2} \left( \int_{-1}^0 \chi'^2 \mathrm{d}\theta \right) \int_0^{+\infty} \left(\xi_0(1+\frac{\delta}{\gamma}) -\frac{\delta}{2\gamma}t\right)^2 |f(t)|^2 t \mathrm{d}t
 + {\mathcal O}(\gamma^3 + \gamma \ell^{-\infty}), 
\end{align}
where we used that $\chi$ is odd with $\chi(-1)=-1$, to get the last equality.
Therefore, the optimal choice is that $\chi(x) = x$ on $[-1,1]$ and the result follows.
\end{proof}
\begin{lemma}\label{lem.2}
We have
\begin{align*}
\int_{T^+_{\delta,\gamma}}\eta_+(x_1)^2|(-i\nabla+\mathbf{A})\Psi|^2\mathrm{d}\mathbf{x}
&\leq  (\Theta_0+\mathcal{O}(\ell^{-\infty}))\|\eta_+\|_{L^2(\R_+)}^2 \\
&\quad-\frac{\gamma+\delta}{2}\int_0^{+\infty}\big(f'(t)^2+(t-\xi_0)^2|f|^2\big)t\mathrm{d}t\\
&\quad+ {\mathcal O}(\gamma^3) + {\mathcal O}(\ell^{-\infty}).
\end{align*}
\end{lemma}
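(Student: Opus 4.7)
The key observation is that on $T^+_{\delta,\gamma}$ the trial state reduces to the planar half-plane ansatz
\[ \Psi(x_1,x_2)=f(x_2)e^{i\xi_0 x_1}, \qquad f=f_\ell, \]
so that a direct computation using $\mathbf{A}=(-x_2,0)$ and the real-valuedness of $f$ yields
\[ |(-i\nabla+\mathbf{A})\Psi|^2 = f'(x_2)^2 + (x_2-\xi_0)^2 f(x_2)^2 =: E(x_2). \]
Since the integrand $\eta_+(x_1)^2 E(x_2)$ separates the variables, the plan is to extend the integration to the full strip $(0,+\infty)\times(0,\ell)$ via Fubini and to subtract the contribution of the thin triangular wedge
\[ W=\bigl\{(x_1,x_2):\,0<x_1<x_2\tan\tfrac{\delta+\gamma}{2},\ 0<x_2<\ell\bigr\}, \]
which is the region (between the $x_2$-axis and the lateral ray $\theta=\theta_\delta-\gamma/2$ of $V^+_{\delta,\gamma}$) that must be removed from the strip to produce $T^+_{\delta,\gamma}$.

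First, Fubini gives
\[ \int_0^{+\infty}\eta_+(x_1)^2\,\dd x_1\int_0^\ell E(x_2)\,\dd x_2 = \|\eta_+\|_{L^2(\R_+)}^2\,q(f_\ell) = (\Theta_0+\mathcal O(\ell^{-\infty}))\|\eta_+\|_{L^2(\R_+)}^2, \]
thanks to the second line of \eqref{eq.en-f-ell}. Next, since $\eta_+\equiv 1$ on a fixed neighborhood of $0$ that contains the $x_1$-projection of $W$ (of size $\mathcal O(\ell(\delta+\gamma))$, small in the regime under consideration), the wedge contribution reduces to
\[ \int_W\eta_+^2 E\,\dd\mathbf{x} = \tan\!\left(\tfrac{\delta+\gamma}{2}\right)\int_0^\ell E(x_2)\,x_2\,\dd x_2. \]
The Taylor expansion $\tan((\delta+\gamma)/2)=(\delta+\gamma)/2+\mathcal O(\gamma^3)$ (using $\delta\leq\gamma$), combined with the Schwartz decay of $f_\star$ to extend the $x_2$-integral to $(0,+\infty)$ at cost $\mathcal O(\ell^{-\infty})$ (third line of \eqref{eq.en-f-ell}), then gives
\[ \int_W\eta_+^2 E\,\dd\mathbf{x} = \tfrac{\delta+\gamma}{2}\int_0^{+\infty}\bigl(f'(t)^2+(t-\xi_0)^2 f(t)^2\bigr) t\,\dd t + \mathcal O(\gamma^3)+\mathcal O(\ell^{-\infty}). \]
Subtracting this from the full-strip contribution yields the claimed inequality.

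The main technical point I anticipate is the geometric identification of $W$ as the complement of $T^+_{\delta,\gamma}$ inside the strip: the boundary of $V^+_{\delta,\gamma}$ also carries a circular arc at radius $r_*=\ell/\cos((\delta+\gamma)/2)$, and one must verify that this arc does not intrude into $\{x_2<\ell\}$. A short check shows that for $\theta\in(\theta_\delta-\gamma/2,\theta_\delta)$ and $x_2<\ell$ one automatically has $r=x_2/\sin\theta\leq \ell/\cos((\delta+\gamma)/2)=r_*$, so the arc lies at or above the line $x_2=\ell$; any residual geometric mismatch is absorbed into $\mathcal O(\ell^{-\infty})$ via the exponential decay of $f_\star$. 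A secondary condition to track is that the parameters be coupled so that $\ell(\delta+\gamma)$ remains inside the neighborhood where $\eta_+\equiv 1$, a point to be fixed later in Section~\ref{sec.eta}.
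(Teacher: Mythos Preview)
Your proof is correct and follows essentially the same route as the paper: split the integral over $T^+_{\delta,\gamma}$ as the full-strip integral minus the contribution of the thin wedge $R\setminus T^+_{\delta,\gamma}$, evaluate the strip via \eqref{eq.en-f-ell}, and expand the wedge term. The only cosmetic difference is that the paper passes to polar coordinates (with the gauge change $\mathbf A\to\mathbf A_0$) to compute the wedge and then substitutes $r=t/\sin\theta$, whereas you integrate directly in Cartesian coordinates; both computations yield the identical expression $\tan\!\big(\tfrac{\delta+\gamma}{2}\big)\int_0^{\ell} E(x_2)\,x_2\,\dd x_2$, since $\int_{\theta_\delta-\gamma/2}^{\pi/2}\sin^{-2}\theta\,\dd\theta=\cot(\theta_\delta-\gamma/2)=\tan\!\big(\tfrac{\delta+\gamma}{2}\big)$. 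One small wording point: the $x_1$-projection of $W$ has size $\ell\tan\!\big(\tfrac{\delta+\gamma}{2}\big)$, which with the eventual choices $\ell=\delta^{-1/2}$, $\gamma=\delta^{1/2}$ tends to $1/2$ rather than to $0$; this is still handled by taking the fixed neighborhood where $\eta_+\equiv 1$ large enough, exactly as you anticipate.
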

\begin{proof}
We recall that $\Psi(\mathbf{x})=u(\mathbf{x})=f(x_2)e^{i\xi_0 x_1}$, on $T^+_{\delta,\gamma}$ and that $\eta_+=1$ on $[0,\varepsilon]$. For small enough $\gamma, \delta$ we can therefore write
\[\int_{T^+_{\delta,\gamma}}\eta_+(x_1)^2|(-i\nabla+\mathbf{A})\Psi|^2\mathrm{d}\mathbf{x}=\int_{R}\eta_+(x_1)^2|(-i\nabla+\mathbf{A})u|^2\mathrm{d}\mathbf{x}-E\,,\]
with $R=(0,+\infty)\times(0,\ell)$ and
\[E=\int_{R\setminus T^+_{\delta,\gamma}}|(-i\nabla+\mathbf{A})u|^2\mathrm{d}\mathbf{x}\,.\]
The calculation of $E$ is similar to the beginning of the proof of Lemma~\ref{lem.1}.
We have
\[E=\int_{R\setminus T^+_{\delta,\gamma}}|(-i\nabla+\mathbf{A}_0)\tilde u|^2\mathrm{d}\mathbf{x}\,,\quad \tilde u(\mathbf{x})=e^{-ix_1x_2/2}u(\mathbf{x})\,,\]
with $\mathbf{A}_0(\mathbf{x})=\frac12(-x_2,x_1)$.
We let $w(r,\theta)=\tilde u(r\cos\theta,r\sin\theta)=f(r\sin\theta)e^{i(\xi_0r\cos\theta-\frac{r^2}{4}\sin(2\theta))}$ to get
\[E=\int_{\theta_\delta-\frac\gamma2}^{\frac\pi2}\int_0^{r_*(\theta)}\left(|\partial_rw|^2+r^{-2}\left|\left(-i\partial_\theta+\frac{r^2}{2}\right)w\right|^2\right)r\mathrm{d}r\mathrm{d}\theta\,,\quad r_*(\theta)=\frac{\ell}{\sin\theta}\geq\ell\,.\]
Since $f$ is real-valued,
\[|\partial_rw|^2=\sin^2\theta|f'(r\sin\theta)|^2+(\xi_0\cos\theta-r\sin\theta\cos\theta)^2|f(r\sin\theta)|^2\,\]
\[r^{-2}\left|(-i\partial_\theta+\frac{r^2}{2})w\right|^2=\cos^2\theta|f'(r\sin\theta)|^2+(-\xi_0\sin\theta+r\sin^2(\theta))^2|f(r\sin\theta)|^2\,.\]
We get
\begin{align}\label{eq:Identity}
E= \int_{\theta_\delta-\gamma/2}^{\frac\pi2}\int_0^{\ell/\sin \theta}\left(|f'(r\sin\theta)|^2+(\xi_0-r\sin\theta)^2 |f(r\sin\theta)|^2\right) r\mathrm {d}r\mathrm{d}\theta\,,
\end{align}
Thus, with the change of variable $r=t/\sin\theta$, we get for all $N >0$,
\[E\geq \frac{\gamma+\delta}{2}\int_0^{+\infty}(tf'(t)^2+t(\xi_0-t)^2|f|^2)\mathrm{d}t-C\gamma^3 - C \gamma \ell^{-N}\,.\]
Moreover,
\[\begin{split}\int_{R}\eta_+(x_1)^2|(-i\nabla+\mathbf{A})u|^2\mathrm{d}\mathbf{x}
&=\int_{R}\eta_+(x_1)^2\left( |(-i\partial_1-x_2)u|^2+|\partial_{x_2}u|^2\right)\mathrm{d}\mathbf{x}\\
&=\int_{R}\eta_+(x_1)^2\left( |(\xi_0-x_2)f(x_2)|^2+|f'(x_2)|^2\right)\mathrm{d}\mathbf{x}\\
&=\|\eta_+\|_{L^2(\R_+)}^2\int_{0}^\ell\left( |(\xi_0-x_2)f(x_2)|^2+|f'(x_2)|^2\right)\mathrm{d}x_2\\
&=(\Theta_0+\mathcal{O}(\ell^{-\infty})) \|\eta\|_{L^2(\R_+)}^2\,.
\end{split}\]
\end{proof}

\subsection{Proof of Theorem~\ref{thm.main}}\label{sec.eta}
We have now almost all the elements at hand to establish Theorem \ref{thm.main}.
Let us first truncate the function $\Psi$  to produce a test function in $H^1_{\mathbf A}(\mathcal C_\delta)$.  We introduce  the function
\[\Psi^{\mathrm{tr}}=\eta\Psi\]
where 
\begin{equation}\label{eq.w-L}
\eta(\mathbf x)=\begin{cases}
\eta_+(x_1)&{\rm if~}\mathbf x\in T^+\\
\eta_+(-x_1\cos\delta+x_2\sin\delta)&{\rm  if~}\mathbf x\in T^-\,.
\end{cases}
\end{equation}
and  $\eta_+\in H^1(\R_+)$ is equal to $1$ on the interval $(0,\varepsilon)$, for  a fixed (and arbitrary) $\varepsilon>0$.
The construction of  $\eta$ and $\Psi^{\rm tr}$ respects the symmetry considerations in Section~\ref{sec.sym} (since $\eta\circ S_\delta=\eta$).  Notice that on $T^+$,
\[ \Psi^{\mathrm{tr}}(\mathbf{x})=\eta_+(x_1)\Psi(\mathbf{x})\,.\]

Recall that our main (small) parameter is $\delta \in (0,\pi)$. We have also introduced another small parameter $\gamma$ with $0<\delta < \gamma$ and the large parameter $\ell$ in the definition of $\Psi$. Below we will need the condition that 
\begin{equation}\label{eq:small_ell}
\ell \delta \rightarrow 0.
\end{equation}
We will make the choices
\begin{equation}\label{eq:params}
	\gamma = \delta^{\frac12}\,, \qquad \ell = \delta^{-\frac{1}{2}}\,,
\end{equation}
 the first one ensuring that $\gamma^{-1}\delta^2=\gamma^3$ (see the remainders in Proposition \ref{prop.ubenergy}) and the second one being rather arbitrary. 
The function $\eta_{+}$ will be chosen to depend on $\delta$ at the very end of the proof.

\subsubsection{ Estimates of the $L^2$-norm and of the energy of $\Psi^{\mathrm{tr}}$}
Let us estimate the $L^2$-norm of our test function $\Psi^{\mathrm{tr}}$.  For small enough $\delta$, we have by \eqref{eq.en-f-ell}, and using \eqref{eq:small_ell} in the second term,
\[
\begin{aligned}
\int_{T^+}|\Psi^{\mathrm{tr}}|^2\mathrm{d}\mathbf{x}&=\int_{(0,+\infty)\times(0,\ell)} \eta_+(x_1)^2|f_\ell (x_2)|^2\dd \mathbf x-\int_{\frac{\pi-\delta}{2}}^{\frac\pi2}\int_0^{\frac{\ell}{\sin\theta}}|f(r\sin\theta)|^2r\mathrm{d}r\mathrm{d}\theta\\
&=(1+\mathcal{O}(\ell^{-\infty}))\|\eta_+\|^2_{L^2(\R_+)}-\int_{\frac{\pi-\delta}{2}}^{\frac\pi2}\sin^{-2}\theta\int_0^{\ell}|f(t)|^2t\mathrm{d}t\mathrm{d}\theta\,.
\end{aligned}\]
We deduce that
\begin{equation}\label{eq.L2}
\int_{T^+}|\Psi^{\mathrm{tr}}|^2\mathrm{d}\mathbf{x}=(1+\mathcal{O}(\ell^{-\infty}))\|\eta_+\|^2_{L^2(\R_+)}-\frac\delta2\int_0^{+\infty}t|f(t)|^2\mathrm{d}t+\mathcal{O}(\delta^3)\,.
\end{equation}
Let us now estimate  the energy of $\Psi^{\mathrm{tr}}$.  An integration by parts and a symmetry consideration yield the following identity ($\|\cdot\|$ denotes the norm  in  $L^2(\mathcal C_\delta)$):
\[ \begin{aligned}
Q_\delta(\Psi^{\mathrm{tr}})&=\int_{\mathcal C_\delta} |\eta|^2|(-i\nabla+\mathbf  A)\Psi|^2\dd \mathbf x-\int_{\mathcal C_\delta} \eta\Delta \eta\,|\Psi|^2\dd \mathbf x\\
&=\| \eta(-i\nabla+\mathbf  A)\Psi\|^2-2\int_{T^+} \eta\Delta \eta\,|\Psi|^2\dd \mathbf x\\
&=\| \eta(-i\nabla+\mathbf  A)\Psi\|^2-2\int_{T^+} \eta(x_1) \eta''(x_1)\,|\Psi|^2\dd \mathbf x\\
&=\| \eta(-i\nabla+\mathbf  A)\Psi\|^2+2 \| f \|_{L^2(\R_+)}^2 \int_{\R_+}|\eta'_+(x_1)|^2\dd x_1\,.
\end{aligned} \]
Notice, that for the integration by parts we needed $\eta_+\in H^2(\R_+)$, but a density argument gives the identity for all $\eta_+\in H^1(\R_+)$, with $\eta_+=1$ on the interval $(0,\varepsilon)$.

\subsubsection{Upper bound and proof of Theorem \ref{thm.main}}
Using the symmetry of our construction with respect to $D_\delta$, we get (as in Lemma \ref{lem.symm}):
\[\| \eta(-i\nabla+\mathbf  A)\Psi\|^2-\Theta_0\|\Psi^{\mathrm{tr}}\|^2=2( \| \eta_+(-i\nabla+\mathbf A)\Psi\|^2_{L^2(T^+)}-\Theta_0\|\Psi^{\mathrm{tr}}\|^2_{L^2(T^+)})\,.\]
With \eqref{eq:params}, \eqref{eq.L2} and Proposition \ref{prop.ubenergy},  we get
\begin{multline*}
Q_\delta(\Psi^{\mathrm{tr}})-\Theta_0\|\Psi^{\mathrm{tr}}\|^2 \leq-\delta\left(
 J-\Theta_0\int_0^{+\infty}t|f(t)|^2{\mathrm d}t
\right)
+2\|\eta'_+\|^2_{L^2(\R_+)}\\
+\mathcal{O}(\delta^{\frac32})+\mathcal{O}(\delta^{\infty})\|\eta_+\|^2_{H^1(\R_+)}\,.
\end{multline*}
Recalling \eqref{eq.J} and \eqref{eq.moment}, we get
\[ J-\Theta_0\int_0^{+\infty}t|f(t)|^2{\mathrm d}t=C_1+\mathcal{O}(\delta^{\infty})\,,\]
so that
\[Q_\delta(\Psi^{\mathrm{tr}})-\Theta_0\|\Psi^{\mathrm{tr}}\|^2\leq - C_1\delta+2\|\eta'_+\|^2_{L^2(\R_+)}+{\mathcal{O}(\delta^{\frac32})}+\mathcal{O}(\delta^{\infty})\|\eta_+\|^2_{H^1(\R_+)}\,.\]
Therefore,
\begin{multline*}
\frac{Q_\delta(\Psi^{\mathrm{tr}})}{\|\Psi^{\mathrm{tr}}\|^2}\leq\Theta_0+\frac{1}{\|\Psi^{\mathrm{tr}}\|^2}\left( - C_1\delta+2\|\eta'_+\|^2_{L^2(\R_+)}
+{\mathcal{O}(\delta^{\frac32})}+\mathcal{O}(\delta^{\infty})\|\eta_+\|^2_{H^1(\R_+)}\right)\,,
\end{multline*}
and by \eqref{eq.L2} and \eqref{eq.FeyHel}, we have 
\[\frac{1}{\|\Psi^{\mathrm{tr}}\|^2}=\frac{1}{2\|\eta_+\|^2}\left(1+\frac{\xi_0\delta}{2\|\eta_+\|^2}+\mathcal{O}\left(\frac{\delta^2}{\|\eta_+\|^4}\right)+\mathcal{O}(\delta^3)\right)\,,\]
where we used that $\|\eta_+\|^2\geq \varepsilon$. Then,
\begin{equation}\label{eq.ubfinal}
\frac{Q_\delta(\Psi^{\mathrm{tr}})}{\|\Psi^{\mathrm{tr}}\|^2}\leq\Theta_0+\mathcal{I}_\delta(\eta_+)+\mathcal{O}(\mathcal{R}_\delta(\eta_+))
+\mathcal{O}(\delta^\infty\|\eta_+\|^2_{H^1(\R_+)})\,,
\end{equation}
where
\begin{equation}\label{eq.opt}
\mathcal{I}_{\delta}(\eta_+)=\frac{1}{\|\eta_+\|^2}\left({-\frac{C_1\delta}{2}}+\|\eta_+'\|^2_{L^2(\R_+)}\right)\,,
\end{equation}
and
\[\mathcal{R}_\delta(\eta_{+})=\frac{\delta^{\frac32}}{\|\eta_+\|^2}+\delta\frac{\|\eta'_+\|^2}{\|\eta_+\|^4}+\delta^3\frac{\|\eta'_+\|^2}{\|\eta_+\|^2}\,.\]
The estimate \eqref{eq.ubfinal} leads us to minimize the functional $\mathcal{I}_\delta$ over the $\eta_{+}\in H^1(\R_+)$ such that $\eta_{+}=1$ on $(0,\varepsilon)$, $\varepsilon>0$ being fixed. For our purpose, namely to finish the proof of Theorem~\ref{thm.main}, it suffices to come up with a sufficiently good trial $\eta_+$, so we will be brief.
The Euler-Lagrange equation 
\[-\eta_{+}''=\mathcal{I}_\delta(\eta_{+})\eta_{+}\,,\quad { \rm on }\,~ [\varepsilon,+\infty)\,,\]
leads us to consider test functions $\eta_{+} =\eta_\alpha$ of the form
\[ \eta_\alpha(x)=\begin{cases}
 1& {\rm on ~} (0,\varepsilon)\\
e^{\alpha\varepsilon}e^{-\alpha x}& { \rm } x\geq\varepsilon\,,
\end{cases}\]
where $\alpha>0$. We notice that
\[\mathcal{I}_\delta(\eta_\alpha)=\frac{-\frac{ C_1\delta}{2}+\|\eta'_\alpha\|^2_{L^2(\varepsilon,+\infty)}}{\varepsilon+\|\eta_\alpha\|^2_{L^2(\varepsilon,+\infty)}}=\frac{\alpha^2-d\alpha}{1+2\varepsilon\alpha}\,,\qquad d= C_1\delta\,.\]
This last quantity is minimal for $\alpha_\delta=\frac{1}{2\varepsilon}\left(-1+\sqrt{ 1+2\varepsilon  d }\right)$, and we have
\[\alpha_\delta\underset{\delta\to 0}{=} \frac{C_1\delta}{2}  +{\mathcal O}(\delta^2)\,.\]
Therefore, we choose $\alpha=\frac{C_1\delta}{2}$ and we have
\[\mathcal{I}_\delta(\eta_\alpha)=-\frac{ C_1^2}{4}\delta^2 +{\mathcal O}(\delta^3)\,.\]
We notice that
\[ \| \eta_\alpha \|_{L^2(\R_{+})}^2 = \varepsilon+\frac{1}{C_1\delta}\,, \qquad \| \eta_\alpha' \|_{L^2(\R_{+})}^2 = \frac{C_1 \delta}{4}\,, \]
so that
\[\mathcal{R}_\delta(\eta_\alpha)=\mathcal{O}(\delta^{\frac52})\,.\]
With the upper bound \eqref{eq.ubfinal}, this concludes the proof of Theorem~\ref{thm.main}.

\section{Regular perturbation}\label{sec.4}

The purpose of this section is to prove Theorem~\ref{thm.main*}, so $\Omega_\delta=\Gamma_\delta^+$ hereafter. We construct a test function supported in a tubular neighborhood of $\partial\Omega_\delta$ where we can use the  Frenet coordinates.

 We choose $\ell=\delta ^{-\rho}$ with $\rho\in(0,1)$ and we let $B_\ell=\R\times(0,\ell)$. For $\delta$ small enough the classical tubular coordinates,
 \[\Phi_\delta : B_\ell\ni(s,t)\mapsto \gamma_\delta(s)+t \mathbf{n}_\delta(s)\,,\]
 are well-defined  in the sense that $\Phi_\delta$ is injective and induces a local (and then global) $\mathscr{C}^1$-diffeomorphism (the Jacobian of which being $1-t\delta\kappa(s)$). Indeed, by using $\gamma'_0=(1,0)$, the continuity of $\delta\mapsto\gamma'_\delta$, and the Taylor formula, we can check that there exist $c,\delta_0>0$ such that, for all $\delta\in(0,\delta_0)$ and all $(s_1,s_2)\in\mathbb{R}^2$, 
 \[|\Phi_\delta(s_2,t_2)-\Phi_\delta(s_1,t_1)|\geq c(|s_2-s_1|+|t_2-t_1|)\,.\] 
  \Bk We let $\Omega_{\delta,\ell}=\Phi_\delta(B_\ell)$. We consider a function of the form
 \begin{equation}\label{eq.trialreg}
 \psi(s,t)=\zeta(\ell^{-1}t)\DG(t)g(s)\,,
 \end{equation}
 where $\zeta$ is a cutoff function (see \eqref{eq.eta}) and where $g$ has to be determined and will be chosen realvalued and normalised in $L^2({\mathbb R})$.  There exists a suitable phase $\varphi$ such that (see \cite[Lemma F.1.1]{FH10}) if we let 
 \[\Psi=\big(e^{i\varphi}\psi\big)\circ\Phi^{-1}_\delta\,,\]
  which is supported in $\Omega_{\delta,\ell}$, we have
 \[Q_\delta(\Psi)=\tilde Q_\delta(\psi)\,,\]
 where (with $\kappa=\kappa(s)$)
\begin{multline*}\tilde Q_{\delta }(\psi)=\int_{B_{\ell}} (1-t\delta \kappa)|\partial_{t}\psi|^2\dd s\dd t\\
+\int_{B_{\ell}}(1-t\delta \kappa)^{-1}\Big|\Big(-i\partial_{s}+\xi_{0}-t+\frac{\delta \kappa t^2}{2}\Big)\psi\Big|^2\dd s\dd t\,.\end{multline*}
By the exponential decay of $\DG$, we have
\begin{align}\label{eq.regP.*}
\int_{B_{\ell}} (1-t\delta \kappa)|\partial_{t}\psi|^2\dd s\dd t
&= \int_{B_{\ell}} (1-t\delta \kappa)|g|^2|\ell^{-1}\zeta'(\ell^{-1}t)f_{\star} +\zeta(\ell^{-1}t) f'_{\star}|^2\dd s\dd t\, \nonumber \\
&\leq \|f'_{\star}\|^2\|g\|^2- \delta  \left(\int_{0}^\infty t|f'_{\star}|^2\dd t\right)\int_{\mathbb{R}}\kappa |g|^2\dd s+\mathcal{O}(\ell^{-\infty})\|g\|^2\,.
\end{align}
Also,
\begin{equation}\label{eq.regP.*b}
\int_{B_{\ell}}(1-t\delta \kappa)^{-1}\Big|\Big(-i\partial_{s}+\xi_{0}-t+\frac{\delta \kappa t^2}{2}\Big)\psi\Big|^2\dd s\dd t\leq I+\mathcal{O}(\ell^{-\infty})(\|g\|^2+\|g'\|^2)
\end{equation}
where, using first that $g$ is real-valued and then that $(1-t\delta\kappa t)^{-1}=1+\delta \kappa t+\mathcal O(\delta^2t^2\kappa^2)$,
\begin{align}\label{eq:I}
I&=\int_{{\mathbb R} \times {\mathbb R}_{+}}(1-t\delta \kappa)^{-1}\Big|-ig'f_{\star}+\Big(\xi_{0}-t+\frac{\delta \kappa t^2}{2}\Big)f_{\star}g\Big|^2\dd s\dd t\,\nonumber \\
&=\int_{B_{\ell}}(1-t\delta \kappa)^{-1}\left(|g'|^2f^2_\star+|g|^2\Big(\xi_{0}-t+\frac{\delta \kappa t^2}{2}\Big)^2f^2_\star\right)\dd s\dd t\,.\nonumber\\
	&\leq(1+C\delta )\|g'\|^2+\|(t-\xi_{0})f_{\star}\|^2\|g\|^2 \nonumber \\
	&\quad+\left(\int_{\mathbb{R}}\delta \kappa |g|^2\dd s\right) \int_{0}^{+\infty} ((\xi_{0}-t)t^2+t(t-\xi_{0})^2)f^2_\star\dd t
	+C\int_{\mathbb{R}} \delta ^2\kappa^2|g|^2\dd s\,.
\end{align}
Combining this with \eqref{eq.regP.*} and \eqref{eq.regP.*b}, we deduce that
\begin{multline*}
	\tilde Q_{\delta }(\psi)\leq \Theta_{0}\|g\|^2+(1+C\delta )\|g'\|^2\\
	+\left(\int_{0}^{+\infty} \Big[((\xi_{0}-t)t^2+t(t-\xi_{0})^2)f^2_\star-tf'^2_{\star}\Big]\dd t\right)\left(\int_{\mathbb{R}}\delta \kappa |g|^2\dd s\right)\\
	+C\int_{\mathbb{R}} \delta ^2\kappa^2|g|^2\dd s+\mathcal{O}(\delta^\infty)\|g\|^2_{H^1(\R)}\,.
\end{multline*}
Using the decay of  $\DG$ and  \eqref{eq.FeyHel},  the norm of $\psi$ is given by 
\begin{equation}\label{eq.norm.psi.curved}
\begin{aligned}
\|\psi\|^2&=\int_{B_\ell}(1-\delta t\kappa)|g(s)|^2|\DG(t)^2|^2\zeta(\ell^{-1}t)^2\dd t\dd s\\
&=\|g\|^2-\delta\xi_0\int_{\R_+} \kappa|g(s)|^2\dd s+\mathcal{O}(\delta^{\infty})\|g\|^2\,.
\end{aligned}
\end{equation}
It follows that
\begin{multline}\label{eq.min.max.curv}
	\tilde Q_{\delta }(\psi)-\Theta_{0}\|\psi\|^2\leq
	(1+\tilde C\delta )\|g'\|^2\\
	+\delta\int_{0}^{+\infty}\Big( \big((\xi_{0}-t)t^2+t(t-\xi_{0})^2)f^2_\star-tf'^2_{\star}+t\Theta_{0}f^2_\star\Big)\dd t\left(\int_{\mathbb{R}} \kappa |g|^2\dd s\right)\\
	+\mathcal O(\delta^2)\int_\R \kappa^2\,| g(s)|^2\dd s+\mathcal{O}(\delta^{\infty})\|g\|^2\,.\end{multline}
We have to investigate the sign of
\[
(1+\tilde C\delta )\|g'\|^2+A\int_{\mathbb{R}}\delta \kappa |g|^2\dd s+\mathcal O(\delta^2)\int_\R \kappa^2\,| g(s)|^2\dd s+\mathcal{O}(\delta^{\infty})\|g\|^2\,,
\]
with
\[A=\int_{0}^{+\infty} \Big((\xi_{0}-t)t^2+t(t-\xi_{0})^2)f^2_\star-tf'^2_{\star}+t\Theta_{0}f^2_\star\Big)\dd t\,.\]
By using \eqref{eq.magic} and \eqref{eq.magicbis}, we have
\[A=-\frac{3C_1}{2}+\int_0^{+\infty} t(t-\xi_0)(t-2\xi_0)\DG^2(t)\dd t=-\frac{3C_1}{2}+\frac{C_1}{2}=-C_1\,.\]
We choose $g$ such that $\|g\|^2=1$ and  observe that \eqref{eq.norm.psi.curved} yields for $\delta$ small enough,
\[ 1 +\xi_0\delta\int_{\R_+}\kappa|g(s)|^2\dd  s-\mathcal O(\delta^\infty)\leq \|\psi\|^{-2}\leq 1+\mathcal O(\delta)\,,\] 
where the foregoing  lower bound results from the fact that $\xi_0>0$,  $\delta>0$ and, for every natural number $N\geq 3$,  
\[
 \|\psi\|^{-2}=1+\xi_0\delta G+\xi_0^2\delta^2 G^2\Big[ \underset{=1+\mathcal O(\delta)\geq 0 }{\underbrace{1+\sum_{j=1}^{N-2} (2\xi_0\delta)^jG^j}}\Big] +\mathcal  O(\delta^N),\quad
 G=\int_{\R_+}\kappa |g(s)|^2\dd s\,.
 \]
By  the min-max principle,  we deduce from \eqref{eq.min.max.curv}
\[ \lambda(\delta)\leq \Theta_0+ F_\delta(g)+\mathcal{O}(\delta^{\infty})\,,\]
where
\begin{equation}\label{eq.efftivequadraticform}
F_\delta(g)=(1+\hat C\delta)\|g'\|^2+\delta\int_\R \big(-C_1\kappa+\hat C\delta\kappa^2 \big)|g(s)|^2\dd s\,. 
\end{equation}
Minimizing over $g$ and using the analysis in Appendix~\ref{sec.app},  we get if $\int_\R\kappa(s)\dd s>0$,
\[ \lambda(\delta)\leq \Theta_0-\frac{C_1^2}{4}\left(\int_{\R}\kappa(s)ds\right)^2\delta^2+ \mathcal{O}(\delta^3)\,. \]
More precisely, it is enough to consider the trial function
\begin{equation}\label{eq.choice}
g(s)=\sqrt{\delta|\langle V\rangle|} \exp\left(\frac{\delta\langle V\rangle}{2}|s|\right)\,,~ V(s)=-C_1\kappa(s)\,,~ \langle V\rangle=\int_{\R} V(s)\dd s<0\,. 
\end{equation}
Notice that, by dominated convergence, $\int_{\R}\kappa^2 |g(s)|^2\dd s=\mathcal{O}(\delta)\int_{\R}\kappa^2\dd s$.

\subsection*{Acknowledgments}
Part of this work was carried out while AK  visited the University of Angers  in February 2022.  AK and NR are grateful  to the support of the  F\'ed\'eration de  Recherche  Math\'ematiques des Pays de Loire and of the University of Angers.   AK would like to thank B.  Helffer for enlightening discussions.

\appendix
\section{Weak perturbation limit}\label{sec.app}
In this appendix, we explain where the choice \eqref{eq.choice} is coming from.

Let $V\in\mathscr{C}_0^0(\R)$ such that $\langle V\rangle=\int_\R V(x)\dd x<0$ and $\delta\in\R$. Let us consider the self-adjoint operator $L_\delta$ associated with the following quadratic form
\[q_\delta(\psi)=\|\psi'\|^2+\delta \int_{\R}V(x)|\psi(x)|^2\dd x\,,\quad\forall\psi\in H^1(\R)\,,\]
which is the main term in \eqref{eq.efftivequadraticform}, with $V=-C_1\kappa$.

By the rescaling $x=\delta^{-1}y$, we see that $L_\delta$ is unitarily equivalent to $\delta^2M_\delta$ where
\[M_\delta=-\partial^2_y+\delta^{-1}V(\delta^{-1}y)\,.\]
At a formal level, we see that $M_\delta$ converges to $M^{\mathrm{eff}}:=-\partial^2_y+\langle V\rangle\delta_0$, whose spectrum is
\[\left\{-\frac{\langle V\rangle^2}{4}\right\}\cup[0,+\infty)\,,\]
and with grounstate $\exp(\frac{\langle V\rangle}{2} y)$.

Let us make this heuristics rigorous by comparing the quadratic forms. The quadratic form associated with $M_\delta$ is
\[p_\delta(\psi)=\|\psi'\|^2+\int_{\R}\delta^{-1}V(\delta^{-1}y)|\psi|^2\dd y\,,\quad\forall\psi\in H^1(\R)\,.\]	
We denote by $p^{\mathrm{eff}}$ the quadratic form associated with $M^{\mathrm{eff}}$:	
\[p^{\mathrm{eff}}(\psi)=\|\psi'\|^2+\langle V\rangle|\psi(0)|^2\,,\quad\forall\psi\in H^1(\R)\,.\]	
Let us estimate the difference $p_\delta-p^{\mathrm{eff}}$. For all $\psi\in H^1(\R)$, we have
\[\begin{split}
|p_\delta(\psi)-p^{\mathrm{eff}}(\psi)|&\leq\left| \int_{\R}\delta^{-1}V(\delta^{-1}y)|\psi(y)|^2\dd y-\langle V\rangle|\psi(0)|^2\right|\\
&\leq\left| \int_{\R}V(y)|\psi(\delta y)|^2\dd y-\langle V\rangle|\psi(0)|^2\right|\\
&\leq \int_{\R}|V(y)|\left||\psi(\delta y)|^2-|\psi(0)|^2\right|\dd y\,.
\end{split}\]
It is well-known that
\[\left||\psi(\delta y)|^2-|\psi(0)|^2\right|\leq \delta^{\frac12} \|(|\psi|^2)'\|\sqrt{|y|}\leq 2\delta^{\frac12} \|\psi\psi'\|\sqrt{|y|} \,,\]
and also that, by the usual Sobolev embedding,
\[\|\psi\|_{L^{\infty}(\R)}\leq C\|\psi\|_{H^1(\R)}\,.\]
Thus,
\[\left||\psi(\delta y)|^2-|\psi(0)|^2\right|\leq  C\delta^{\frac12} \|\psi\|^2_{H^1(\R)}\sqrt{|y|} \,,\]
so that
\[|p_\delta(\psi)-p^{\mathrm{eff}}(\psi)|\leq C\delta^{\frac12}\|\psi\|^2_{H^1(\R)}\,.\]
Therefore,
\[p_\delta^-(\psi)\leq p_\delta(\psi)\leq p_\delta^+(\psi)\,,\]
where
\[\begin{split} p_\delta^-(\psi)&=(1-C\delta^{\frac12})\|\psi'\|^2+\langle V\rangle|\psi(0)|^2-C\delta^{\frac12}\|\psi\|^2\,,\\
p_\delta^+(\psi)&=(1+C\delta^{\frac12})\|\psi'\|^2+\langle V\rangle|\psi(0)|^2+C\delta^{\frac12}\|\psi\|^2\,.\end{split}\]
If $\nu(\delta)$ is the bottom of the spectrum of $M_\delta$, we have
\[\nu(\delta)=-\frac{\langle V\rangle^2}{4}+\mathcal{O}(\delta^{\frac12})<0\,.\]

\bibliographystyle{abbrv}
\bibliography{biblio}

\end{document}